\title{Avoiding Squares and Overlaps Over the Natural Numbers}
\author{
Mathieu Guay-Paquet\thanks{Supported by an NSERC Alexander Graham Bell Canada Graduate Scholarship.} \\
Department of Combinatorics and Optimization \\
\texttt{mguaypaq@math.uwaterloo.ca} \\
\and
Jeffrey Shallit \\
School of Computer Science \\
\texttt{shallit@cs.uwaterloo.ca} \\[1ex]
University of Waterloo \\
Waterloo, ON~~N2L~3G1 \\
Canada}
\newtheorem{theorem}{Theorem}
\newtheorem{lemma}[theorem]{Lemma}
\newtheorem{corollary}[theorem]{Corollary}
\theoremstyle{definition}
\newtheorem{definition}[theorem]{Definition}
\newtheorem{remark}[theorem]{Remark}
\newcommand{\N}{\mathbb{N}}
\newcommand{\w}{\mathbf{w}}
\begin{document}
\maketitle
\begin{abstract}
We consider avoiding squares and overlaps over the natural numbers, using a greedy algorithm that chooses the least possible integer at each step; the word generated is lexicographically least among all such infinite words. In the case of avoiding squares, the word is $01020103\cdots$, the familiar ruler function, and is generated by iterating a uniform morphism. The case of overlaps is more challenging. We give an explicitly-defined morphism $\varphi \colon \N^* \to \N^*$ that generates the lexicographically least infinite overlap-free word by iteration. Furthermore, we show that for all $h,k \in \N$ with $h \leq k$, the word $\varphi^{k-h}(h)$ is the lexicographically least overlap-free word starting with the letter $h$ and ending with the letter $k$, and give some of its symmetry properties.
\end{abstract}

\section{Introduction}

Avoidability problems play a significant role in combinatorics on words. Typically we are given a finite alphabet $\Sigma$, and we want to know if there exist infinite words over $\Sigma$ that avoid various patterns, such as squares and overlaps. A \emph{square} is a nonempty word of the form $xx$, such as the French word \texttt{chercher}. An \emph{overlap} is a word of the form $axaxa$ where $a$ is a single letter and $x$ is a (possibly empty) word, such as the French word \texttt{entente}. An overlap is sometimes called a $(2^+)$-power, because it is just slightly more than a square. In two famous papers, the Norwegian mathematician Axel Thue \cite{Thue:1906,Thue:1912,Berstel:1995} proved that there exist infinite binary words containing no overlaps, and infinite words over a 3-letter alphabet containing no squares.

Suppose we try to generate an infinite squarefree word over the alphabet $\Sigma_3 = \{0, 1, 2\}$ letter by letter, using the familiar backtracking algorithm \cite{Golomb&Baumert:1965}. At every step, we choose the smallest letter possible that maintains the property of not having a square; if no such letter exists, we are forced to backtrack to a previous letter and increment it. For example, this approach generates the string $w = 0102010$, at which point no letter in $\Sigma_3$ can be appended without getting a square. Thus we are forced to backtrack one letter, replacing the last letter of $w$ with $2$ to obtain $0102012$, and we continue from there. Although this approach will eventually generate the lexicographically least squarefree infinite word over $\Sigma_3$, surprisingly little is known about it. For example, we do not even know whether the number of positions that one has to backtrack is bounded.

This suggests dropping the backtracking entirely, by enlarging our alphabet to the set of natural numbers $\N$. (For some recent papers on words and morphisms over an infinite alphabet, see \cite{Ferenczi:2006,Gonidec:2006,Mauduit:2006}.) In this situation, the concept of irreducibility of words and morphisms, introduced in Section~\ref{irred}, becomes relevant. As we will see in Section~\ref{squares}, the resulting squarefree word,
\[\w_2 = 01020103010201040102010301020105\cdots,\]
is a famous one; it is the so-called ``ruler'' sequence, where the $n$th term is the exponent of the highest power of 2 dividing $n$, and it can be generated by iterating an irreducible squarefree morphism.

Instead of avoiding squares over $\N$, we could try to avoid overlaps. Using a greedy algorithm without backtracking, we generate the word
\[\w_{2^+} = {\scriptstyle 0010011001002001001100100210010020010011001002001001200100110010020010011001003}\cdots,\]
with many remarkable properties. 
Among other things, $\w_{2^+}$ is generated by iterating a certain irreducible overlap-free morphism, but in this case, the morphism is much more complicated. This is discussed in Sections~\ref{overlaps} and \ref{more}.

\section{Notation}

Our notation is mostly standard, but we collect it here for ease of reference.

An \emph{alphabet} $\Sigma$ is a set of symbols, called letters. Although alphabets are usually finite in the literature on combinatorics on words, in this paper we also consider the alphabet $\N = \{0, 1, 2, \ldots\}$ of natural numbers.

A \emph{word} over this alphabet is a (possibly empty) string of letters chosen from $\Sigma$. The empty word is denoted $\epsilon$, and the length of a word $w$ is denoted $|w|$. We write $w[n]$ for the $n$th letter of $w$ (with indexing starting at 1).

The set of all finite words over $\Sigma$ is denoted by $\Sigma^*$, the set of non-empty finite words by $\Sigma^+$, and the set of one-way right-infinite words by $\Sigma^\omega$.

The basic operation on words is concatenation. Usually we represent concatenation by juxtaposition, so that $x$ concatenated with $y$ is written $xy$. However, we sometimes write it as $x \cdot y$ for clarity; for example, $(n+1) \cdot (n+2)$ denotes the word of length 2 consisting of the letter $n+1$ followed by the letter $n+2$.

A word $y$ is a \emph{factor} of a word $w$ if there exist words $x, z$ such that $w = xyz$. If $x = \epsilon$, then $y$ is a \emph{prefix} of $w$; if $z = \epsilon$, then $y$ is a \emph{suffix} of $w$. If $y$ is a prefix (resp., suffix) of $w$, then we write $y^{-1}w$ (resp., $wy^{-1}$) to denote the word obtained by removing the prefix (resp., suffix) $y$ from $w$.

Given an ordering on the elements of $\Sigma$, there is an associated \emph{lexicographic order} on $\Sigma^* \cup \Sigma^\omega$. We write $x \leq y$ if $x$ is a prefix of $y$, or if we can write $x = wcx'$ and $y = wdy'$, where $w$ is a common prefix of $x$ and $y$ and $c, d$ are letters with $c < d$.

Given a set $P$ of words, called a \emph{pattern}, we say that $w$ \emph{avoids} $P$ (or that $w$ is \emph{$P$-free}) if no word of $P$ is a factor of $w$. Some examples of interesting patterns include the squares $\{xx : x \in \Sigma^+\}$, the cubes $\{xxx : x \in \Sigma^+\}$, and the overlaps $\{cxcxc : c \in \Sigma, x \in \Sigma^*\}$.

Let $\Sigma, \Delta$ be alphabets. A \emph{morphism} is a function $h \colon \Sigma^* \to \Delta^*$ such that $h(xy) = h(x)h(y)$ for all $x, y \in \Sigma^*$. To define a morphism, it suffices to give $h(c)$ for all letters $c \in \Sigma$.

The basic operation on morphisms is composition. If $\Sigma, \Delta, \Gamma$ are alphabets and $h \colon \Sigma^* \to \Delta^*$, $g \colon \Delta^* \to \Gamma^*$ are morphisms, then their composition $g \circ h \colon \Sigma^* \to \Gamma^*$ is also a morphism. If $\Sigma = \Delta$, so that $h \colon \Sigma^* \to \Sigma^*$, we can iterate it. We write $h^n$ for the $n$-fold composition of $h$ with itself, and let $h^0$ denote the identity map.

If $c \in \Sigma$ is a letter and $h \colon \Sigma^* \to \Sigma^*$ is a morphism with $h(c) = cx$ for some word $x$, then
\[h^n(c) = c \cdot x \cdot h(x) \cdot h^2(x) \cdot \cdots \cdot h^{n-1}(x).\]
If $h^n(x) \neq \epsilon$ for all $n \geq 0$, then there is a unique infinite word of which $c, h(c), h^2(c), \ldots$ are all prefixes, and we write it as $h^\omega(c)$.

Given a property of words, we say that a morphism has that property if it preserves the property when applied to words. For example, given a pattern $P$, we say that the morphism $h$ is $P$-free if $h(w)$ is $P$-free whenever $w$ is.

Given an alphabet $\Sigma$, we let $S \colon \Sigma^+ \to \Sigma^+$ be the left cyclic shift operator, defined by $S(cx) = xc$ for all $c \in \Sigma$ and $x \in \Sigma^*$, and we let $R \colon \Sigma^* \to \Sigma^*$ be the reversal operator, defined by $R(c) = c$ for $c \in \Sigma$ and $R(xy) = R(y)R(x)$ for $x,y \in \Sigma^*$. Note that these operators are not morphisms.

\section{Backtracking and no-backtracking algorithms}

As we noted, given a pattern $P$, we can ask whether there are infinite words avoiding $P$. For a finite alphabet $\Sigma$, this turns out to be equivalent to the existence of arbitrarily long finite words avoiding $P$, as the following algorithm shows:

\begin{enumerate}
	\item
		Start with the empty word $w_0 = \epsilon$.
	
	\item
		For each $i = 0, 1, 2, \ldots$, let $c_i \in \Sigma$ be a letter such that there are arbitrarily long words avoiding $P$ with $w_i c_i$ as a prefix, and set $w_{i+1} = w_i c_i$. Note that the existence of such a $c_i$ is guaranteed by the pigeonhole principle.
		
	\item
		Since $w_i$ is a prefix of $w_j$ whenever $i \leq j$, we can take $w = \lim_{i\to\infty} w_i$. Then $w$ is an infinite word over $\Sigma$ avoiding $P$.
\end{enumerate}

If we put an ordering on the letters of $\Sigma$ and choose $c_i$ to be minimal at each step, then the algorithm actually shows something slightly stronger: if there are arbitrarily long words avoiding $P$, then there is a \emph{lexicographically least} infinite word $\alpha$ avoiding $P$. Since it is not clear \textit{a priori} how to choose $c_i$ in step 2, we also have the following, more explicit algorithm which will either show that there are no infinite words avoiding $P$, or converge to $\alpha$:

\begin{enumerate}
	\item
		Start with the empty word $w = \epsilon$. Let $a$ and $z$ be the lexicographically smallest and largest letters in $\Sigma$, respectively.

	\item
		Repeat this step as long as possible: if $w$ does not have a suffix in $P$, append $a$ to it. Otherwise, remove all trailing $z$'s from $w$, and replace the last letter of $w$ by the lexicographically next one in $\Sigma$. This will fail if $w$ contains only $z$'s.

	\item
		If the preceding step ever fails, conclude that there is no infinite word avoiding $P$. Otherwise, $w$ will eventually start with longer and longer prefixes of $\alpha$.
\end{enumerate}

Unfortunately, while the algorithm converges to $\alpha$ (if it exists), it can be hard to determine if a given letter of $w$ is there to stay, or if it will eventually be replaced. One way around this difficulty is to consider patterns where no backtracking actually occurs in the second algorithm. In such cases, we get the \emph{no-backtracking} algorithm:

\begin{enumerate}
	\item
		Start with the empty word $w_0 = \epsilon$.

	\item
		For each $i = 0, 1, 2, \ldots$, let $c_i \in \Sigma$ be the lexicographically first letter in $\Sigma$ such that $w_i c_i$ does not have a suffix in $P$, if it exists, and set $w_{i+1} = w_i c_i$.
		
	\item
		If the preceding step never fails, then the $w_i$ are the prefixes of $\alpha$.
\end{enumerate}

This is what we consider in this paper. For the patterns of squares and overlaps over $\N$, the no-backtracking algorithm works, and we construct the resulting words. The squarefree word is well-known, but the overlap-free word is not, and we explore its structure.

\section{Irreducibility of words and morphisms}
\label{irred}

In the context of the no-backtracking algorithm, the concept of irreducibility becomes relevant. Given a pattern $P$ over an ordered alphabet $\Sigma$, we say that a word $w$ is \emph{irreducible at position $p$} (with respect to $P$) if replacing $w[p]$ with any lexicographically smaller letter in $\Sigma$ creates a new word with a factor in $P$ ending at position $p$. (Note that we allow the possibility that $w$ itself already has a factor in $P$ ending at that position.) In particular, if $w[p]$ is the smallest letter of $\Sigma$, then $w$ is automatically irreducible at $p$.

If a word is irreducible at every position, we simply say that it is \emph{irreducible}. Sometimes we will speak of words $w$ that are \emph{irreducible after the first position}, meaning that $w$ is irreducible at positions $2, 3, \ldots, |w|$.

These concepts are related to the lexicographic ordering over $\Sigma$ in the following way. If $v$ is irreducible with respect to $P$ and $w$ is $P$-free, then either $w$ is a prefix of $v$, or $v \leq w$ lexicographically. This can be seen by considering a longest common prefix $x$ of $v$ and $w$. Either this is all of $w$, or all of $v$, or each word contains a letter following $x$, in which case the next letter of $v$ must be strictly smaller than the next letter of $w$. It follows from this that if an infinite word $w$ is $P$-free and irreducible, then it is the lexicographically least infinite $P$-free word, and the finite $P$-free and irreducible words are exactly the prefixes of $w$.

\section{A squarefree word without backtracking}
\label{squares}

As a warmup, let us consider the case of squarefree words. For the rest of this section, we consider the pattern $P = \{xx : x \in \N^+\}$ of squares. Any finite squarefree word $w$ over $\N$ can be extended to a longer squarefree word by appending a letter that does not appear in $w$, so it follows that the no-backtracking algorithm will work and generate the lexicographically least infinite squarefree word over $\N$,
\[\w_2 = 01020103010201040102010301020105\cdots.\]
This is the well-known ruler sequence, which is sequence A007814 in Sloane's \textit{Encyclopedia} \cite{Sloane}. For other mentions of the ruler sequence, see \cite[Example 8, p.\ 187]{Allouche&Shallit:1992} and \cite{Er:1985}.

\begin{theorem}
Let $\gamma : \N^* \to \N^*$ be the morphism defined by $\gamma(i) = 0 \cdot (i+1)$. Then $\w_2 = \gamma^\omega(0)$.
\end{theorem}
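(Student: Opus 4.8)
The plan is to show that $\gamma^\omega(0)$ is squarefree and irreducible with respect to the pattern $P$ of squares; by the criterion at the end of Section~\ref{irred}, this forces it to equal $\w_2$, the lexicographically least infinite squarefree word over $\N$. Write $\mathbf{r} = \gamma^\omega(0)$; note first that $\gamma(0) = 01$ begins with $0$, so $\mathbf{r}$ is well-defined and $\gamma^n(0)$ is a prefix of $\mathbf{r}$ for every $n$. A convenient explicit description, proved by an easy induction on $n$, is that $\gamma^n(0)$ has length $2^n$, its letters in the even positions are all $0$, and the letter in an odd position $2j-1$ (for $1 \le j \le 2^{n-1}$) equals $1 + \nu(j)$, where $\nu(j)$ is the exponent of the largest power of $2$ dividing $j$; equivalently, $\mathbf{r}[n]$ is the $2$-adic valuation $\nu(n)$. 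This matches the "ruler sequence" characterization quoted just before the theorem.

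For squarefreeness I would argue directly from the valuation description rather than via a general morphism lemma. Suppose $\mathbf{r}$ contained a square $uu$ of length $2\ell$ starting at position $p$, so $\mathbf{r}[p+i] = \mathbf{r}[p+\ell+i]$ for $0 \le i < \ell$. Since the positions of a given value thin out geometrically, one shows $\ell$ must itself be even (look at where the $0$'s sit: $0$'s occupy exactly the even positions, so a shift by $\ell$ preserving the pattern of $0$'s forces $\ell$ even), and then — because $\gamma$ maps the letter $0$ to $01$ and the letter $i$ to $0\,(i{+}1)$, i.e. is "synchronizing" in the obvious way — the square $uu$ is the $\gamma$-image of a shorter square in $\mathbf{r}$, contradicting minimality of $\ell$. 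The base case $\ell = 1$ is the statement that $\mathbf{r}$ has no two equal consecutive letters, which is clear since consecutive letters alternate between a $0$ and a positive value. This descent is the technical heart of the squarefreeness half.

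For irreducibility, fix a position $p \ge 1$ and a letter $c < \mathbf{r}[p]$; I must exhibit a square ending at position $p$ in the modified word. If $p$ is odd then $\mathbf{r}[p] = 1 + \nu((p{+}1)/2)$ and the candidate smaller values are $0, 1, \dots, \nu((p{+}1)/2)$. For $c = 0$: the letter immediately before position $p$ is $\mathbf{r}[p-1] = 0$ (an even position), so putting $0$ at position $p$ creates the square $00$. For $0 < c \le \nu((p{+}1)/2)$, one locates the most recent earlier occurrence of the value $c$ and checks, using the geometric spacing of values in the ruler sequence together with the palindrome-like self-similarity of $\mathbf{r}$ under $\gamma$, that the block between that occurrence and position $p$ is repeated, giving a square ending at $p$. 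If $p$ is even then $\mathbf{r}[p] = 0$, the smallest letter, so irreducibility at $p$ is automatic. Assembling these cases shows $\mathbf{r}$ is irreducible, and combined with squarefreeness we conclude $\mathbf{r} = \w_2 = \gamma^\omega(0)$.

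The main obstacle I anticipate is making the "square descends under $\gamma$" step fully rigorous: one must verify that any factor of $\mathbf{r}$ of even length starting and ending at the right parity of position is genuinely a $\gamma$-image of a factor of $\mathbf{r}$, which amounts to checking that $\gamma$ is a comma-free-type code with a clean synchronization property, and then that squares pull back to squares under this correspondence. Everything else is a routine induction on the valuation description.
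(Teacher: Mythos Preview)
Your overall strategy---show $\gamma^\omega(0)$ is squarefree and irreducible, then invoke Section~\ref{irred}---is exactly the paper's. The execution differs in that the paper works at the level of the \emph{morphism} rather than the infinite word: it proves once that $\gamma$ is a squarefree morphism and an irreducible morphism, so both properties propagate from the word $0$ through every iterate. For squarefreeness the device is a left-inverse morphism $\rho$ given by $\rho(0)=\epsilon$ and $\rho(i)=i-1$ for $i\ge1$; since $\rho(\gamma(w))=w$ and $\rho$ is a morphism, any square $xx$ inside $\gamma(w)$ (which must contain a nonzero letter) produces the nonempty square $\rho(x)\rho(x)$ inside $w$. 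That single line is exactly the ``square descends under $\gamma$'' step you flag as the main obstacle, and it requires no synchronization or comma-free argument because $\rho$ is itself a morphism and hence sends factors to factors. For irreducibility the paper likewise reduces through $\gamma$: at an even position $p$, replacing the letter of $\gamma(w)$ by some $c>0$ is the same as replacing $w[p/2]$ by $c-1$, so irreducibility of $w$ gives a square in $w$ ending at $p/2$, and applying $\gamma$ yields a square ending at $p$. Your direct ``locate the previous occurrence of $c$ and check the block repeats'' can be pushed through via the valuation formula, but the paper's inductive formulation is shorter and avoids that computation.

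One bookkeeping slip to fix: your parities are reversed throughout. In $\mathbf r=\gamma^\omega(0)=0102\cdots$ the \emph{odd} positions carry $0$ and the even position $2j$ carries $1+\nu(j)$ (your statement $\mathbf r[n]=\nu(n)$ is correct, but the odd/even description contradicts it). Accordingly, in the irreducibility argument the automatic case is $p$ odd (where $\mathbf r[p]=0$), and the interesting case is $p$ even, with $\mathbf r[p-1]=0$ because $p-1$ is odd.
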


\begin{proof}
We prove the result by showing that the morphism $\gamma$ is squarefree and irreducible.

Consider the morphism $\rho$ defined by $\rho(0) = \epsilon$ and $\rho(i) = i-1$ for $i \geq 1$. Then it is easy to see that $\rho$ is a left inverse of $\gamma$, in the sense that $\rho(\gamma(w)) = w$ for all words $w$. Suppose $\gamma(w)$ contains a square $xx$. Then $x$ contains at least one nonzero letter, so $w = \rho(\gamma(w))$ contains the nonempty square $\rho(x)\rho(x)$. Hence $\gamma$ is a squarefree morphism.

Now consider the letter $d$ at position $p$ in $\gamma(w)$, and suppose we replace it by a letter $c < d$. If $p$ is odd, then $d = 0$, so this cannot be done and $\gamma(w)$ is irreducible at this position. If $p$ is even, then $\gamma(w)[p-1]$ is 0, so taking $c = 0$ creates the square $00$ ending at position $p$. On the other hand, taking $c > 0$ creates a word of the form $\gamma(w')$, where $w'$ is obtained from $w$ by replacing the letter $d-1$ at position $p/2$ by the smaller letter $c-1$. The word $\gamma(w')$ has a square ending at position $p$ if and only if $w'$ has a square ending at position $p/2$. Thus, if $w$ is irreducible, then $\gamma(w)$ is irreducible, so $\gamma$ is an irreducible morphism.

It now follows from the discussion in Section~\ref{irred} that $\gamma^\omega(0)$ is the lexicographically least infinite squarefree word over $\N$.
\end{proof}

The following summarizes some folklore results about the ruler sequence.

\begin{corollary}
Let $\w_2 = 01020103 \cdots$, and let $\gamma$ be the morphism defined above. Then
\begin{enumerate}[(a)]
	\item
		$|\gamma^i (j)| = 2^i$ for $i \geq 0$;
	\item
		$\gamma^i (j)$ starts with $0$ and ends with $i+j$ for $i \geq 1$;
	\item
		$\w[i] = \nu_2 (i)$, the exponent of the highest power of 2 dividing $i$;
	\item
		The least index $i$ such that $\w[i] = j$ is $i = 2^j$;
	\item
		The letter $j$ occurs in $\w_2$ with limiting frequency $2^{-i-1}$.
\end{enumerate}
\end{corollary}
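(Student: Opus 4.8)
The plan is to establish the five items roughly in order, leaning on two facts already in hand: $\gamma$ is $2$-uniform (i.e., $|\gamma(c)| = 2$ for every letter $c$), and $\w_2 = \gamma^\omega(0)$ is the unique fixed point of $\gamma$ that begins with $0$. Items (a) and (b) are straightforward inductions on $i$. For (a), the base case $i = 0$ is trivial, and $|\gamma^{i+1}(j)| = 2\,|\gamma^i(j)|$ since $\gamma$ doubles lengths. For (b), the case $i = 1$ is just $\gamma(j) = 0\cdot(j+1)$; for the inductive step, $\gamma^{i+1}(j) = \gamma(\gamma^i(j))$ begins with $\gamma(0)$ and ends with $\gamma(i+j)$ by the inductive hypothesis, so it begins with $0$ and ends with $(i+1)+j$.

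The substance is in (c). The key observation is that the identity $\gamma(\w_2) = \w_2$ is equivalent, position by position, to the two recurrences $\w_2[2k-1] = 0$ and $\w_2[2k] = \w_2[k] + 1$ for all $k \geq 1$, because $\gamma$ sends the $k$th letter $c$ of $\w_2$ to the length-two block $0\cdot(c+1)$ occupying positions $2k-1$ and $2k$. These two recurrences determine a sequence on indices $\geq 1$ uniquely, by strong induction on the index (putting $k = 1$ in the first recurrence fixes the value at index $1$, and each even index $2k$ is reduced to the strictly smaller index $k$). Since $\nu_2$ satisfies exactly the same recurrences---$\nu_2$ of any odd number is $0$, and $\nu_2(2k) = \nu_2(k) + 1$---uniqueness gives $\w_2[i] = \nu_2(i)$ for all $i \geq 1$.

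Items (d) and (e) then follow quickly. For (d), $\w_2[i] = j$ iff $\nu_2(i) = j$ iff $i = 2^j m$ with $m$ odd, whose least solution is $i = 2^j$; alternatively, (b) exhibits $\gamma^j(0)$ as a length-$2^j$ prefix of $\w_2$ ending in $j$, and a short induction shows $\gamma^j(0)$ contains no letter exceeding $j$ and attains $j$ only in its last position. For (e), the occurrences of $j$ in $\w_2$ are exactly the indices with $\nu_2(i) = j$, and among $1, \dots, N$ there are $\lfloor N/2^j \rfloor - \lfloor N/2^{j+1}\rfloor$ of these (divisible by $2^j$ but not by $2^{j+1}$); dividing by $N$ and letting $N \to \infty$ yields the limiting frequency $2^{-j} - 2^{-j-1} = 2^{-j-1}$. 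I do not expect any real obstacle: the entire content is the fixed-point/recurrence identification in (c), and even there the only point needing care is the one-line uniqueness argument for sequences satisfying those two recurrences.
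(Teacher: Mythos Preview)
Your proof is correct in every part, and in fact the paper itself offers no proof of this corollary---it reads ``Left to the reader.'' Your argument supplies exactly the kind of routine verification the authors had in mind: the inductions for (a) and (b) are the obvious ones, the fixed-point recurrences $\w_2[2k-1]=0$, $\w_2[2k]=\w_2[k]+1$ you extract for (c) are the standard way to identify a $2$-uniform morphic sequence, and (d), (e) follow immediately from (c) via elementary arithmetic on $\nu_2$. (You have also silently corrected the paper's typo in (e), where the frequency should read $2^{-j-1}$ rather than $2^{-i-1}$.)
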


\begin{proof}
Left to the reader.
\end{proof}

\section{An overlap-free word without backtracking}
\label{overlaps}

For the rest of this paper, we consider the pattern $P = \{cxcxc : c \in \N, x \in \N^*\}$ of overlaps. As with squares, any finite overlap-free word over $\N$ can be extended to a longer overlap-free word by appending a letter that does not appear in it, so the lexicographically least infinite overlap-free word $\w_{2^+}$ over $\N$ exists and can be generated by using the no-backtracking algorithm.

We will show that $\w_{2^+}$ can be written as $\varphi^\omega(0)$ for a certain remarkable morphism $\varphi \colon \N^* \to \N^*$ with
\begin{align*}
	\varphi(0) &= 001 \\
	\varphi(1) &= 1001002 \\
	\varphi(2) &= 200100110010020010011001003 \\
	&\hspace{1.2ex}\vdots
\end{align*}
To do this, we will first define $\varphi$, and then show that it is both overlap-free and irreducible.

One particularly useful definition of $\varphi \colon \N^* \to \N^*$ is
\[\varphi(h) = (S^{-1}(\varphi^h(00))) \cdot (h+1), \qquad h \in \N,\]
but to make sure this definition is not circular and prove properties of $\varphi$, we need to be more careful. We will define a sequence of morphisms $\varphi_h \colon \{0,\ldots,h\}^* \to \{0,\ldots,h+1\}^*$ that extend each other, and let $\varphi$ be their limit.

\begin{definition}
For all $h \in \N$, let $\varphi_h \colon \{0,\ldots,h\}^* \to \{0,\ldots,h+1\}^*$ be defined by $\varphi_h(h') = \varphi_{h'}(h')$ for $h' < h$ and by
\[\varphi_h(h) = (S^{-1} \circ \varphi_{h-1} \circ \cdots \circ \varphi_0(00)) \cdot (h+1).\]
Note that for $h = 0$, this definition gives $\varphi_0(0) = S^{-1}(00) \cdot 1 = 001$. Since $\varphi_h$ extends $\varphi_{h'}$ for $h' < h$, it is meaningful to define $\varphi \colon \N^* \to \N^*$ to be their common extension.
\end{definition}

\begin{lemma}\label{form}
For all $h \in \N$, $\varphi(h)$ starts with $h$ and ends with $h+1$. Furthermore, if $w \in \{0,\ldots,h\}^*$, then there are as many occurrences of $h+1$ in $\varphi(w)$ as there are occurrences of $h$ in $w$, and each one is preceded by a 0.
\end{lemma}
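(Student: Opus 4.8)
The plan is to determine the first letter and the last two letters of each block $\varphi(h)$, and then read off both assertions of the lemma from the block decomposition $\varphi(w)=\varphi(w[1])\cdot\varphi(w[2])\cdots$. Two consequences of the definition will be used throughout. First, since $\varphi$ restricted to $\{0,\dots,j\}^*$ equals $\varphi_j$, which takes values in $\{0,\dots,j+1\}^*$, an induction on $j$ starting from $00\in\{0\}^*$ gives $\varphi^j(00)\in\{0,\dots,j\}^*$; in particular $\varphi^h(00)$ contains no occurrence of $h+1$. Second, using this alphabet bound together with the fact that $\varphi$ coincides with $\varphi_j$ on $\{0,\dots,j\}^*$, a similar induction identifies $\varphi^h(00)=\varphi_{h-1}\circ\cdots\circ\varphi_0(00)$, so that the compact formula
\[\varphi(h)=\bigl(S^{-1}(\varphi^h(00))\bigr)\cdot(h+1)\]
is legitimate for every $h\in\N$; here $S^{-1}$ moves the last letter of its argument to the front.

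The heart of the argument is the following claim, which I would prove by induction on $h$: for every $h\ge 0$, the word $\varphi^h(00)$ ends with the letter $0$ followed by the letter $h$, and the word $\varphi(h)$ starts with $h$ and ends with the letter $0$ followed by the letter $h+1$. The base case $h=0$ is a direct check, since $\varphi^0(00)=00$ and $\varphi(0)=001$. For the inductive step, assume the claim for $h$. Because $\varphi^h(00)$ ends in $h$, applying $\varphi$ shows that $\varphi^{h+1}(00)=\varphi(\varphi^h(00))$ ends with $\varphi(h)$, which ends in $0\cdot(h+1)$ by hypothesis; hence $\varphi^{h+1}(00)$ ends in $0\cdot(h+1)$. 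Feeding this into the compact formula for $\varphi(h+1)$, the word $S^{-1}(\varphi^{h+1}(00))$ begins with the old last letter $h+1$ and ends with the old penultimate letter $0$, so $\varphi(h+1)$ begins with $h+1$ and ends in $0\cdot(h+2)$, completing the induction.

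Both assertions of the lemma then follow quickly. Part one was established along the way: $\varphi(h)$ starts with $h$ and ends with $h+1$. For part two, take $w\in\{0,\dots,h\}^*$ and decompose $\varphi(w)$ into the blocks $\varphi(w[i])$. A block $\varphi(c)$ with $c<h$ lies in $\{0,\dots,c+1\}^*\subseteq\{0,\dots,h\}^*$, so it contains no $h+1$; and $\varphi(h)=(S^{-1}(\varphi^h(00)))\cdot(h+1)$ contains exactly one $h+1$, namely its final letter, since $\varphi^h(00)$ contains none. Summing over the blocks, the number of occurrences of $h+1$ in $\varphi(w)$ equals the number of occurrences of $h$ in $w$. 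Finally, each such $h+1$ is the last letter of some block $\varphi(h)$, which ends in $0\cdot(h+1)$ by the claim, so inside $\varphi(w)$ that $h+1$ is immediately preceded by a $0$.

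The step I expect to be most delicate is the preliminary bookkeeping: justifying the compact formula $\varphi(h)=(S^{-1}(\varphi^h(00)))\cdot(h+1)$ from the $\varphi_h$-definition, and keeping straight exactly when $\varphi$ may be replaced by a particular $\varphi_j$. Once that formula is available, the combinatorial content is just the single short induction above.
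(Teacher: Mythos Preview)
Your proof is correct and follows essentially the same route as the paper: induction on $h$ to show that $\varphi(h)$ starts with $h$ and ends in $0\cdot(h+1)$, followed by the block decomposition of $\varphi(w)$ to count occurrences of $h+1$. The only difference is organizational: you first establish the compact formula $\varphi(h)=(S^{-1}(\varphi^h(00)))\cdot(h+1)$ and phrase the induction in terms of $\varphi^h(00)$, whereas the paper works directly with the $\varphi_{h-1}\circ\cdots\circ\varphi_0(00)$ definition throughout; the combinatorial content is the same.
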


\begin{proof}
We proceed by induction on $h$ with a vacuous base case. For every letter $h' < h$ that appears in $w$, the corresponding factor of $\varphi(w)$ is $\varphi(h') = \varphi_{h'}(h') \in \{0,\ldots,h'+1\}^* \subseteq \{0,\ldots,h\}^*$, so $\varphi(h')$ does not contain an occurrence of $h+1$.

We also have
\[\varphi_h(h) = (S^{-1} \circ \varphi_{h-1} \circ \cdots \circ \varphi_0(00) ) \cdot (h+1) \in \{0,\ldots,h\}^*(h+1),\]
so for each occurrence of $h$ in $w$, the corresponding factor $\varphi(h) = \varphi_h(h)$ in $\varphi(w)$ contains exactly one occurrence of $h+1$. By induction, the last letter of $\varphi_{h-1} \circ \cdots \circ \varphi_0(00)$ is $h$, and it is preceded by 0, so $\varphi_h(h)$ starts with $h$ and ends with $0 \cdot (h+1)$. Since all occurrences of $h+1$ in $\varphi(w)$ occur in this way, this completes the proof.
\end{proof}

\begin{theorem}\label{olirred}
For all $h \in \N$, $\varphi_h$ is irreducible and irreducible after the first position with respect to overlaps. Thus, $\varphi = \lim_{h\to\infty} \varphi_h$ has these properties.
\end{theorem}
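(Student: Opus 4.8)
The plan is to prove the two assertions (irreducibility, and irreducibility after the first position) simultaneously by induction on $h$. The base case $h=0$ is immediate, since $\varphi_0(0) = 001$ and both $\varphi_0$ being irreducible and irreducible-after-the-first-position reduce to statements about appending $0$ or $01$: any letter of a word in $\{0,1\}^*$ that is $0$ is automatically irreducible, and the only interesting case is a $1$ preceded by a $0$, where lowering it to $0$ creates the square $00$. The inductive step is where the work is. Assume $\varphi_{h-1}$ has both properties; we must show $\varphi_h$ does. Since $\varphi_h$ agrees with $\varphi_{h'}$ on letters $h' < h$, the only genuinely new behaviour comes from the block $\varphi(h) = (S^{-1}\circ\varphi_{h-1}\circ\cdots\circ\varphi_0(00))\cdot(h+1)$, which by Lemma~\ref{form} both starts with $h$ and ends with $0\cdot(h+1)$.

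The core of the argument is to take an arbitrary overlap-free word $w \in \{0,\ldots,h\}^*$, form $\varphi_h(w)$, and check irreducibility at each position $p$ of $\varphi_h(w)$. I would split into cases according to where $p$ falls. If $\varphi_h(w)[p] = 0$ there is nothing to prove. If $p$ is the position of an $h+1$, then by Lemma~\ref{form} it is preceded by a $0$, so lowering it to $0$ creates the square $00$; lowering it to any value in $\{1,\ldots,h\}$ must be shown to create an overlap — this is where one uses that an $h+1$ only ever arises as the last letter of a $\varphi(h)$-block, so the surrounding context is rigidly determined, and one can exhibit the forced overlap explicitly (the block $\varphi(h)$ has a long enough determined prefix/suffix that the altered letter sits in a repetition). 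The remaining positions $p$ lie strictly inside some block $\varphi(h')$ with $h'\le h$, or at a block boundary strictly below the top. For these I would argue as in the proof of the squarefree case: a candidate replacement letter $c$ either is too large to be a letter of that block's alphabet (so it plays the role of a new "top" letter and one reduces to the previous case / gets a forced repetition against the block structure), or it is small enough that the altered word is again of the form $\varphi_h(w')$ for an overlap-free-violating or lexicographically-smaller $w'$, and then one invokes the key fact that $\varphi_h$ is overlap-free together with the inductive hypothesis that $w$ itself was irreducible — an overlap ending at position $p$ in $\varphi_h(w')$ forces an overlap ending at the corresponding position in $w'$, contradicting irreducibility of $w$ at that position.

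The main obstacle, as usual with these infinite-alphabet morphism arguments, is the "top letter" case: showing that when we lower the letter $h+1$ (or lower some interior letter to a value that is still below $h+1$ but changes the block structure) we are \emph{forced} to create an overlap, not merely a square. This requires a careful combinatorial description of the fixed prefix $S^{-1}\circ\varphi_{h-1}\circ\cdots\circ\varphi_0(00)$ — in particular which short prefixes and suffixes of $\varphi(h)$ coincide — so that one can point to an explicit $cxcxc$. I expect this to lean on Lemma~\ref{form} and on a recursive unfolding of $\varphi(h)$ in terms of $\varphi(h-1)$, $\ldots$, $\varphi(0)$, using the "each $h+1$ is preceded by $0$" clause repeatedly to pin down the local picture. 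Once that case is handled, the descent to the inductive hypothesis via the left-inverse-type reasoning (analogous to the morphism $\rho$ in the squarefree proof, here a suitable "decoding" that peels off one application of $\varphi$) closes the argument, and passing to the limit $\varphi = \lim_{h\to\infty}\varphi_h$ is immediate since each finite prefix of any word lies in the domain of some $\varphi_h$.
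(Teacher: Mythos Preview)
Your induction-and-case-analysis skeleton is right, but two steps are genuinely incomplete.

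The ``main obstacle'' you anticipate --- showing that lowering the final $h{+}1$ of $\varphi_h(h)$ to any smaller letter creates an overlap --- is in fact immediate once you notice that $y=\varphi_{h-1}\circ\cdots\circ\varphi_0(00)=\varphi^h(0)\,\varphi^h(0)$ is a \emph{square}. Since $\varphi^h(0)$ ends in $h$, writing $\varphi^h(0)=xh$ gives $hy=hxhxh$, an overlap; and $\varphi_h(h)$ is precisely $hy$ with its last letter raised from $h$ to $h{+}1$. Lowering that letter back to $h$ recreates the overlap $hy$, while lowering it to any $c<h$ also produces an overlap because $y$ itself is irreducible (it is the image of the irreducible word $00$ under morphisms that are irreducible by the inductive hypothesis). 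No ``recursive unfolding'' or delicate local combinatorics is required. Incidentally, your fallback ``lowering to $0$ creates the square $00$'' does not suffice: the pattern being avoided is overlaps, not squares, and Lemma~\ref{form} guarantees only a single preceding $0$.

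Your treatment of block-start positions is also off. Lowering the first letter of a block from $d=w[i]$ to $c<d$ does \emph{not} yield a word of the form $\varphi_h(w')$ (the rest of that block is still the tail of $\varphi_h(d)$), so there is nothing to feed into overlap-freeness of $\varphi_h$ --- which in any case is Theorem~\ref{olfree}, not yet available, and runs in the wrong direction for what you need. The clean argument goes forward: if $w$ is irreducible at position $i$, then $w[1\ldots i-1]$ ends in a square $uu$ beginning with $c$; applying $\varphi_h$, the prefix of $\varphi_h(w)$ up to the altered position ends in the square $\varphi_h(u)\varphi_h(u)$, which also begins with $c$ by Lemma~\ref{form}, so placing $c$ there produces an overlap. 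This uses only that $\varphi_h$ is a morphism preserving first letters.
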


\begin{proof}
We proceed by induction on $h$ with a vacuous base case. 

First, let us show that for each $h' \leq h$, the word $\varphi_h(h')$ is irreducible after the first position. For $h' < h$, the string $h'$ is irreducible after the first position, so by induction, $\varphi_h(h') = \varphi_{h'}(h')$ is irreducible after the first position. For $h' = h$, the word $y = \varphi_{h-1} \circ \cdots \circ \varphi_0(00)$ is irreducible, a square, and ends with the letter $h$ by Lemma~\ref{form}. Thus, the word
\[hy = (S^{-1} \circ \varphi_{h-1} \circ \cdots \circ \varphi_0(00)) \cdot h\]
is irreducible after the first position and is an overlap, so the word
\[\varphi_h(h) = (S^{-1} \circ \varphi_{h-1} \circ \cdots \circ \varphi_0(00)) \cdot (h+1)\]
is irreducible after the first position. 

Now let $w \in \{0,\ldots,h\}^*$ be a word. Then $\varphi_h(w)$ can be broken up into blocks corresponding to the images under $\varphi_h$ of the individual letters in $w$. By the remarks above, each position in $\varphi_h(w)$ that is not the first position of a block is irreducible.

By Lemma~\ref{form}, we can recover $w$ from $\varphi_h(w) = \varphi(w)$ by taking the letters in the first position of each block. Suppose we replace the letter $d$ at one of these positions $p$ by a letter $c < d$. If this creates an overlap in $w$ ending at $p$, then position $p$ is preceded by a square $xx$ in $w$ that begins with $c$. This gives a square $\varphi_h(x)\varphi_h(x)$ in $\varphi_h(w)$ that begins with $c$, so replacing $d$ by $c$ in $\varphi_h(w)$ creates an overlap ending at that position. Thus, if $w$ is irreducible at a position, then $\varphi_h(w)$ is irreducible at the first position of the corresponding block. If $w$ is irreducible or irreducible after the first position, then $\varphi_h(w)$ has the same property, so $\varphi_h$ is both irreducible and irreducible after the first position.
\end{proof}

\begin{theorem}\label{olfree}
For all $h \in \N$, $\varphi_h$ is an overlap-free morphism. Thus, $\varphi = \lim_{h\to\infty} \varphi_h$ is an overlap-free morphism.
\end{theorem}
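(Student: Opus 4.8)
The plan is to prove the statement by induction on $h$, with the (essentially vacuous) base case $h=0$ handled by inspection: the overlap-free words over $\{0\}$ are $\epsilon$, $0$, $00$, and $\varphi_0$ sends them to $\epsilon$, $001$, $001001$, all overlap-free. For the inductive step I would assume $\varphi_{h'}$ is overlap-free for every $h' < h$ and take an arbitrary overlap-free $w \in \{0,\dots,h\}^*$; the goal is to show $\varphi_h(w)$ is overlap-free. If $h$ does not occur in $w$, then $w \in \{0,\dots,h-1\}^*$ and $\varphi_h(w) = \varphi_{h-1}(w)$, so we are done; hence we may assume $h$ occurs in $w$. Two facts will be used repeatedly. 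First, $D_h := \varphi_{h-1}\circ\cdots\circ\varphi_0(00)$ is overlap-free, being the image of the overlap-free word $00$ under the successive overlap-free morphisms $\varphi_0,\dots,\varphi_{h-1}$; likewise $\varphi^h(0) := \varphi_{h-1}\circ\cdots\circ\varphi_0(0)$ is overlap-free, and by Lemma~\ref{form} (applied repeatedly) it contains the letter $h$ exactly once, at its last position. Second, every block $\varphi_h(h')$ with $h' \le h$ is overlap-free: for $h' < h$ this is $\varphi_{h'}$ applied to the single letter $h'$, so it follows from the induction hypothesis; and for $h' = h$, writing $\varphi^h(0) = u'\cdot h$ so that $\varphi_h(h) = h\,u'\,h\,u'\,(h+1)$ by the definition and Lemma~\ref{form}, one checks directly that no overlap can occur here: it cannot contain $h+1$ (which appears once), it cannot use $h$ as its repeated letter ($h$ appears twice), and it cannot contain $h$ inside the repeated word $x$ either, for then $h$'s two forced occurrences, one per copy of $x$, would be the word's only two $h$'s, forcing the period to be $|\varphi^h(0)|$ and hence the overlap to be longer than $h\,u'\,h\,u'$; so the overlap would lie inside a copy of the overlap-free word $u'$, a contradiction.

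With the blocks dealt with, suppose for contradiction that $\varphi_h(w)$ contains an overlap $a\,x\,a\,x\,a$. Since the blocks are overlap-free, this overlap meets at least two consecutive blocks of the decomposition $\varphi_h(w) = \varphi_h(c_1)\cdots\varphi_h(c_n)$, where $w = c_1\cdots c_n$. The core of the argument is to pin down how the overlap sits relative to this decomposition. The key structural input, all from Lemma~\ref{form}, is that the higher letters occur sparsely and predictably: $h+1$ occurs only as the last letter of each block $\varphi_h(h)$ (always preceded by $0$), while $h$ occurs only as the first letter of each block $\varphi_h(h)$, at one further distinguished position inside each such block, and as the last letter of each block $\varphi_h(h-1)$; below these, the first letter of every block equals its ``name'' $c_i$. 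I would organize the case analysis around whether $a\,x\,a\,x\,a$ contains an occurrence of $h$ or of $h+1$. If it contains neither, then it lies inside a factor of $\varphi_h(w)$ avoiding these letters, which is either of the form $\varphi_{h-1}(\text{a factor of }w)$ or a copy of the word $u'$ above or, near the ends of $\varphi_h(h)$-blocks, a short concatenation thereof, and the induction hypothesis together with overlap-freeness of $D_h$ rules this out. If it contains one of $h$, $h+1$, then, since $a$ is not that letter, it contains at least two copies of it, one in each copy of $x$; this forces the overlap to straddle whole blocks, and tracking the word-positions of these markers shows that the period $q = |ax|$ equals the total $\varphi_h$-length of some factor $c_{k+1}\cdots c_\ell$ of $w$, whose images therefore coincide as words. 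Decoding through $\varphi_h$, which is injective since each block begins with its own name, then yields a repetition in $w$: either an overlap outright, contradicting overlap-freeness of $w$, or a square whose flanking letters in $\varphi_h(w)$ decode to the missing repeated letter, again producing an overlap in $w$. Either way we contradict overlap-freeness of $w$, so $\varphi_h(w)$ is overlap-free, completing the induction.

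Finally, $\varphi = \lim_{h\to\infty}\varphi_h$ inherits the property: if $w$ is overlap-free then every letter of $w$ is at most some $h$, so $w \in \{0,\dots,h\}^*$ and $\varphi(w) = \varphi_h(w)$ is overlap-free. The main obstacle is the block-alignment bookkeeping in the inductive step. Because $\varphi_h$ is highly non-uniform and a block such as $\varphi_h(1) = 1001002$ contains the name letters of other blocks in its interior, one cannot simply assert that every occurrence of a letter begins a block; instead the alignment must be forced from the sparse, recursively structured occurrences of the top letters $h$ and $h+1$, and the ``short'' configurations in which the overlap spans only two or three adjacent blocks have to be checked by hand. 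This is a more delicate version of the one-line reduction available for squares, where $\gamma(i) = 0\cdot(i+1)$ admitted a genuine morphic left inverse; here no such left inverse exists, which is precisely what makes the overlap case substantially harder.
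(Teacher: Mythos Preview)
Your plan shares the paper's inductive scaffolding (induction on $h$; first show each single block $\varphi_h(h')$ is overlap-free; then rule out cross-block overlaps), and your treatment of single blocks is essentially the paper's. Where you diverge is in the cross-block step. You organize it around whether the putative overlap contains the top letters $h$ or $h+1$: if not, reduce to the inductive hypothesis for $\varphi_{h-1}$; if so, use those occurrences as alignment markers. The paper instead runs a single, uniform alignment argument that works for \emph{all} blocks at once, using only that each block $\varphi_h(c)$ begins with $c$ (which dominates every interior letter), ends with $c{+}1$, and that every non-initial occurrence of $c$ or $c{+}1$ inside it is preceded by $0$. Concretely: if $x$ is an overlap of period $n$ and the block $B$ containing $x[2n{+}1]$ starts at $x[k]$, the paper shows $k>n{+}1$ (else $x[k{+}n{-}1]=0$ by the ``preceded by $0$'' rule, while $x[k{-}1]$ ends a block and is nonzero), then pins down the block containing $x[k{-}n]$ and shows it must start exactly at $x[k{-}n]$; from there the block decomposition of the whole overlap is forced and decodes to an overlap in $w$.

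Your route is viable but leaves real work undone. The ``neither $h$ nor $h{+}1$'' case is fine: the maximal $\{h,h{+}1\}$-free factors of $\varphi_h(w)$ are either your $u'$ (a prefix of the overlap-free word $\varphi^h(0)$) or of the form $\varphi_{h-1}(v)$, possibly with a trailing $h$ stripped, for a factor $v$ of $w$; no ``short concatenations'' are needed, since each $\varphi_h(h)$ block is bounded by $h$ on the left and $h{+}1$ on the right. The ``contains $h$ or $h{+}1$'' case is where the proposal is thin. The claim ``since $a$ is not that letter'' is unjustified---nothing rules out $a=h$ or $a=h{+}1$ a priori. More importantly, two occurrences of $h$ a period apart need not sit at the same \emph{type} of position (start of a $\varphi_h(h)$ block, its midpoint, or the end of a $\varphi_h(h{-}1)$ block), so their mere presence does not force the period to equal the $\varphi_h$-length of a factor of $w$; you would need a further case analysis comparing these types, which is precisely what the paper's uniform argument sidesteps. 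The paper's key observation---that the first letter of \emph{every} block, not just the top block, dominates its interior and that block ends are nonzero letters preceded by $0$---is what makes the alignment argument short and case-free; adopting it would let you collapse your two cases into one.
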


\begin{proof}
We proceed by induction on $h$ with a vacuous base case.

First, let us show that for each $h' \leq h$, the word $\varphi_h(h')$ is overlap-free. For $h' < h$, the string $h'$ is overlap-free, so by induction, $\varphi_h(h') = \varphi_{h'}(h')$ is overlap-free. For $h' = h$, the word 00 is overlap-free, so $y = \varphi_{h-1} \circ \cdots \circ \varphi_0(00)$ is overlap-free. By Lemma~\ref{form}, $y$ contains exactly two occurrences of the letter $h$, and no occurrences of the letter $h+1$. Thus,
\[hy = (S^{-1} \circ \varphi_{h-1} \circ \cdots \circ \varphi_0(00)) \cdot h\]
is itself an overlap, and it does not contain any other overlap, so
\[\varphi_h(h) = (S^{-1} \circ \varphi_{h-1} \circ \cdots \circ \varphi_0(00)) \cdot (h+1)\]
is overlap-free. 

Now let $w \in \{0,\ldots,h\}^*$ be a word, and break up $\varphi_h(w)$ into blocks corresponding to the images under $\varphi_h$ of the individual letters in $w$. Suppose $x$ is an overlap of length $2n+1$ in $\varphi_h(w)$, so that $x[i] = x[i+n]$ for all $1 \leq i \leq n+1$. We want to show that $w$ contains an overlap.
	
Since $x$ is not contained in a single block of $\varphi_h(w)$ by the remarks above, let $x[k]$ be the start of the block $B$ containing $x[2n+1]$, so that $1 < k \leq 2n+1$. Then $x[k-1]$ is the end of a block, so it is not 0. If $k \leq n+1$, then $x[k+n] = x[k]$ is contained in the block $B$, and it follows from Lemma~\ref{form} that $x[k-1] = x[k+n-1] = 0$, a contradiction. Thus, we actually have $n+1 < k \leq 2n+1$.

Since the block $B$ starting at $x[k]$ contains $x[2n+1]$, we have $x[k] \geq x[k], \ldots, x[2n]$. Consider the block $A$ that contains $x[k-n]$, and say $A$ starts at $x[j]$ and ends at $x[\ell]$. Since $x[k-n-1] = x[k-1] \neq 0$, the block $A$ does not end at $x[k-n]$, so $x[j] \geq x[k-n]$. Since $x[k-n] \geq x[k-n], \ldots, x[n]$, the block $A$ does not end at any of these positions, so $n+1 \leq \ell < k$. Since $x[\ell-n] = x[\ell] > x[j]$, the block $A$ starts after this position, so $1 \leq \ell-n < j \leq k-n \leq n+1$.

Since $x[j-1]$ is the end of a block, it is not 0. Thus, $x[j+n-1] \neq 0$, so $x[j+n]$ is not the end of a block. Since $x[j] \geq x[j], \ldots, x[\ell-1]$, we have $x[j+n] \geq x[j+n], \ldots, x[2n]$, so the block containing $x[j+n]$ does not end at any of these positions. Thus, the block containing $x[j+n]$ also contains $x[2n+1]$, so $j+n \geq k$. Since we have $j \leq k-n$ from above, we have $j = k-n$.
	
The picture so far is that $x[k]$ is the start of the block $B$ containing $x[2n+1]$, and that $x[k-n]$ is the start of the block $A$ that ends at $x[\ell]$, where $n+1 \leq \ell < k \leq 2n+1$. Let $sxt$ be the factor of $\varphi_h(w)$ formed by taking the blocks containing $x$. Then we have
\begin{align*}
	sxt
		&= s \cdot x[1 \ldots k-n-1] \cdot x[k-n \ldots \ell] \cdot x[\ell+1 \ldots k-1] \cdot x[k \ldots 2n+1] \cdot t \\
		&= s \cdot x[1 \ldots k-n-1] \cdot \varphi_h(x[k]) \cdot x[\ell+1 \ldots k-1] \cdot \varphi_h(x[k]) \\
		&= s \cdot x[1 \ldots \ell-n] \cdot \varphi_h(z) \cdot \varphi_h(x[k]) \cdot \varphi_h(z) \cdot \varphi_h(x[k]) \\
		&= \varphi_h(x[k]) \cdot \varphi_h(z) \cdot \varphi_h(x[k]) \cdot \varphi_h(z) \cdot \varphi_h(x[k]) \\
		&= \varphi_h(x[k] \cdot z \cdot x[k] \cdot z \cdot x[k]),
\end{align*}
where $z$ is a (possibly empty) string, and the next-to-last equality holds because the string $s \cdot x[1 \ldots \ell-n]$ is non-empty and ends with $x[\ell-n] = x[\ell] = x[k]+1$. By Lemma~\ref{form}, $\varphi_h(h')$ starts with $h'$ and ends with $h'+1$ for every letter $h' \leq h$, so the last letter of a block (or the first one) completely determines which letter it comes from, and it follows that $\varphi_h$ is injective. Thus, $w$ must actually contain the string $x[k] \cdot z \cdot x[k] \cdot z \cdot x[k]$, which is an overlap. This shows that $\varphi_h$ is overlap-free.
\end{proof}

\begin{remark}
Note that our proof of Theorem~\ref{olfree} only depends on three facts about $\varphi(h)$. For all $h \in \N$,
\begin{enumerate}
	\item $\varphi(h)$ is overlap-free;
	\item $\varphi(h) \in h\{0,\ldots,h\}^*(h+1)$;
	\item every occurrence of $h$ or $h+1$ in $\varphi(h)$ after the first letter is preceded by 0.
\end{enumerate}
Thus, we know that various other morphisms from $\N^* \to \N^*$, such as the morphism defined by $h \mapsto h \cdot 0 \cdot (h+1)$, are also overlap-free.
\end{remark}

\begin{corollary}
The word $\varphi^\omega(0)$ is the lexicographically least infinite overlap-free word over $\N$.
\end{corollary}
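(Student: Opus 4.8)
The plan is to combine the two structural facts just established --- that $\varphi$ is an overlap-free morphism (Theorem~\ref{olfree}) and that it is irreducible (Theorem~\ref{olirred}) --- with the general principle recorded in Section~\ref{irred}: an infinite word that is both $P$-free and irreducible is automatically the lexicographically least infinite $P$-free word. So it suffices to verify that $\varphi^\omega(0)$ is well-defined, overlap-free, and irreducible with respect to overlaps. Well-definedness is immediate: $\varphi(0) = 001 = 0 \cdot 01$ begins with $0$, and since $\varphi$ maps every nonempty word to a nonempty word, the words $0, \varphi(0), \varphi^2(0), \ldots$ form a chain of prefixes of unbounded length, so their limit $\varphi^\omega(0)$ exists.

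For overlap-freeness, I would argue by induction on $n$ that each $\varphi^n(0)$ is overlap-free: the base word $\varphi^0(0) = 0$ is overlap-free, and Theorem~\ref{olfree} says $\varphi$ preserves overlap-freeness. Since any overlap occurring in $\varphi^\omega(0)$ is a finite factor and hence occurs in some finite prefix $\varphi^n(0)$, the infinite word is overlap-free. For irreducibility, I would likewise argue by induction that each $\varphi^n(0)$ is irreducible: the single letter $0$ is irreducible (it is the least letter of $\N$), and Theorem~\ref{olirred} says $\varphi$ preserves irreducibility. Then, for any position $p$, whether a word is irreducible at $p$ depends only on the letters in positions $1, \ldots, p$; choosing $n$ large enough that $\varphi^n(0)$ is a prefix of $\varphi^\omega(0)$ of length at least $p$, irreducibility of $\varphi^n(0)$ at $p$ transfers to $\varphi^\omega(0)$. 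As $p$ is arbitrary, $\varphi^\omega(0)$ is irreducible.

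With $\varphi^\omega(0)$ shown to be an infinite overlap-free irreducible word, the discussion of Section~\ref{irred} gives at once that it is the lexicographically least infinite overlap-free word over $\N$, i.e., $\varphi^\omega(0) = \w_{2^+}$. Since all the genuine work has already been carried out in Theorems~\ref{olfree} and~\ref{olirred}, there is no substantial obstacle in this argument; the only point needing a moment's care is the passage from the finite words $\varphi^n(0)$ to their limit --- that is, observing that ``overlap-free'' and ``irreducible at a given position'' are both properties determined by a sufficiently long finite prefix, so they lift to the infinite word.
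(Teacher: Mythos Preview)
Your proposal is correct and follows essentially the same approach as the paper: invoke Theorems~\ref{olirred} and~\ref{olfree} to conclude that $\varphi^\omega(0)$ is overlap-free and irreducible, then appeal to the discussion in Section~\ref{irred}. The paper's proof is a two-line version of your argument; you have simply made explicit the routine passage from the finite prefixes $\varphi^n(0)$ to the infinite limit, which the paper leaves to the reader.
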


\begin{proof}
By Theorems \ref{olirred} and \ref{olfree}, the infinite word $\varphi^\omega(0)$ is overlap-free and irreducible, so by the remarks of Section~\ref{irred}, it is the lexicographically least infinite overlap-free word over $\N$.
\end{proof}

This shows our main result, that $\w_{2^+} = \varphi^\omega(0)$, but we also get the following interesting corollary, which is the starting point for our exploration of the structure of $\varphi$ in the next section.

\begin{corollary}\label{psicor}
For all $0 \leq h \leq k$, let $\psi(h,k)$ be the lexicographically least overlap-free word over $\N$ that starts with $h$ and ends with $k$. Then $\psi(h,k) = \varphi^{k-h}(h)$.
\end{corollary}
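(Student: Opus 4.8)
The plan is to verify directly that the word $v := \varphi^{k-h}(h)$ has all the properties that define $\psi(h,k)$: it is overlap-free, it starts with $h$ and ends with $k$, and no overlap-free word with these same two endpoints is lexicographically smaller. The main argument needs no induction on $k-h$; induction is used only for some bookkeeping facts about $v$.

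First I would record the elementary structure of $v$. Since $\varphi$ maps $\{0,\ldots,m\}^*$ into $\{0,\ldots,m+1\}^*$, an easy induction gives $\varphi^{j}(h)\in\{0,\ldots,h+j\}^*$ for all $j$, and then Lemma~\ref{form} shows that $\varphi^{j}(h)$ starts with $h$, ends with $h+j$, and --- by counting occurrences with the ``furthermore'' clause of Lemma~\ref{form} --- contains the letter $h+j$ exactly once, necessarily in its final position. Taking $j = k-h$ gives that $v$ starts with $h$, ends with $k$, and contains $k$ only in its last position. Overlap-freeness of $v$ follows from Theorem~\ref{olfree} applied to the one-letter (hence overlap-free) word $h$, and the fact that $v$ is irreducible after the first position follows from Theorem~\ref{olirred}, since $h$ is vacuously irreducible after its first position and $\varphi$ preserves this property.

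The heart of the proof is a minimality statement obtained by adapting the lexicographic comparison of Section~\ref{irred}. The subtlety is that $v$ begins with the letter $h$, which need not be the least letter of $\N$, so $v$ is \emph{not} irreducible at position $1$, and we must restrict attention to overlap-free competitors that also begin with $h$ --- which is exactly the set over which $\psi(h,k)$ is required to be minimal. So let $w$ be an arbitrary overlap-free word starting with $h$ and ending with $k$, and let $x$ be the longest common prefix of $v$ and $w$, so that $|x| \geq 1$. If $x = v$, then $v$ is a prefix of $w$, hence $v \leq w$. If $x = w$, then $w$ is a prefix of $v$; since $k$ occurs in $v$ only at the last position, a proper prefix of $v$ cannot end in $k$, forcing $w = v$. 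Otherwise $v$ and $w$ have distinct letters at position $p := |x|+1 \geq 2$; were $w[p] < v[p]$, then replacing $v[p]$ by $w[p]$ would, by irreducibility of $v$ at position $p$, create an overlap ending at position $p$, which would be a factor of the prefix $w[1\ldots p]$ of the overlap-free word $w$ --- a contradiction. Hence $v[p] < w[p]$ and $v < w$. In every case $v \leq w$, and since $v$ itself is overlap-free, starts with $h$, and ends with $k$, this gives $\psi(h,k) = v = \varphi^{k-h}(h)$.

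The only delicate point I anticipate is the last paragraph: one has to be sure that ``irreducible after the first position'' --- which is what Theorem~\ref{olirred} actually delivers --- is precisely enough once the competitors are cut down to words beginning with $h$, and that the uniqueness of the occurrence of $k$ in $v$ is genuinely needed, to rule out a proper prefix of $v$ being a valid competitor. Everything else (the containment $\varphi^{j}(h)\in\{0,\ldots,h+j\}^*$, the start and end letters, the occurrence count, overlap-freeness, and inheritance of irreducibility) is routine given Lemma~\ref{form} and Theorems~\ref{olirred} and~\ref{olfree}.
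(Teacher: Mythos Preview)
Your proof is correct and follows essentially the same approach as the paper's own proof: both establish that $\varphi^{k-h}(h)$ is overlap-free and irreducible after the first position via Theorems~\ref{olirred} and~\ref{olfree}, note via Lemma~\ref{form} that it starts with $h$ and has a unique occurrence of $k$ at the end (so no proper prefix can be a competitor), and then run the lexicographic comparison of Section~\ref{irred} restricted to words beginning with $h$. The only difference is presentational --- you spell out the longest-common-prefix trichotomy explicitly, whereas the paper compresses it into a one-line appeal to the discussion in Section~\ref{irred}.
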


\begin{proof}
Let $w$ be any overlap-free word starting with $h$ and ending with $k$. The word $h$ is overlap-free and irreducible after the first position, so by Theorems \ref{olirred} and \ref{olfree} the word $\varphi^{k-h}(h)$ is overlap-free and irreducible after the first position. Also, by Lemma~\ref{form}, it starts with $h$ and contains a single occurrence of $k$, at the end. Since $w$ contains $k$, it cannot be a proper prefix of $\varphi^{k-h}(h)$. Since $\varphi^{k-h}(h)$ is irreducible after the first position and $w$ is overlap-free and starting with the same letter, we have $\varphi^{k-h}(h) \leq w$ lexicographically. Thus, $\psi(h,k) = \varphi^{k-h}(h)$.
\end{proof}

\section{More about the overlap-free words \texorpdfstring{$\protect\psi(h,k)$}{psi(h,k)}}
\label{more}

For $0 \leq h \leq \ell \leq k$, the word $\psi(h,k)$ has $\psi(h,\ell)$ as a prefix and $\psi(\ell,k)$ as a suffix, since
\[\psi(h,k) = \varphi^{k-h}(h) = \varphi^{\ell-h}(\varphi^{k-\ell}(h)) = \varphi^{k-\ell}(\varphi^{\ell-h}(h)),\]
and $\varphi^{k-\ell}(h)$ starts with $h$, and $\varphi^{\ell-h}(h)$ ends with $\ell$.

However, we can be much more precise than this about the structure of $\psi(h,k)$. Letting $a(h,k) = |\psi(h,k)|$, we have the following result:

\begin{theorem}\label{fact}
Let $0 \leq h \leq k$. Then
\[\psi(h,k) = \left( \prod_{\ell=h}^{k-1} S^{-a(\ell,k-1)}(\psi(0,k-1))^2 \right) \cdot k.\]
\end{theorem}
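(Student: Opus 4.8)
The plan is to derive a single-step recursion for $\psi(h,k)$ and then iterate it. Concretely, I would first prove that
\[\psi(h,k) = \bigl(S^{-a(h,k-1)}(\psi(0,k-1))\bigr)^{2}\cdot\psi(h+1,k) \qquad \text{for } 0 \le h < k,\]
and then unfold this $k-h$ times, using $\psi(k,k) = \varphi^{0}(k) = k$, to reach $\psi(h,k) = \bigl(\prod_{\ell=h}^{k-1} (S^{-a(\ell,k-1)}(\psi(0,k-1)))^{2}\bigr)\cdot k$ (the degenerate case $h=k$ being the empty product). It is worth recording once that $S^{-m}(w^{2}) = (S^{-m}(w))^{2}$ for every word $w$ and every $m \ge 0$: prove it for $m=1$ by writing $w = w'c$ and iterate. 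In particular the two possible readings of $S^{-a(\ell,k-1)}(\psi(0,k-1))^{2}$ in the statement agree, so there is no ambiguity to resolve.

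To prove the recursion, I would begin with $\psi(h,k) = \varphi^{k-h}(h) = \varphi^{k-h-1}(\varphi(h))$ and substitute the closed form $\varphi(h) = S^{-1}(\varphi^{h}(00))\cdot(h+1)$ coming from the definition of $\varphi$. Since $\varphi^{h}(00) = \varphi^{h}(0)\varphi^{h}(0) = \psi(0,h)^{2}$ by Corollary~\ref{psicor}, the rotation-power fact rewrites this as $\varphi(h) = \bigl(S^{-1}(\psi(0,h))\bigr)^{2}\cdot(h+1)$, and applying the morphism $\varphi^{k-h-1}$ gives
\[\psi(h,k) = \bigl(\varphi^{k-h-1}(S^{-1}(\psi(0,h)))\bigr)^{2}\cdot\varphi^{k-h-1}(h+1) = \bigl(\varphi^{k-h-1}(S^{-1}(\psi(0,h)))\bigr)^{2}\cdot\psi(h+1,k).\]
So everything reduces to identifying the single word $\varphi^{k-h-1}(S^{-1}(\psi(0,h)))$.

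For that I would invoke Lemma~\ref{form}: $\psi(0,h) = \varphi^{h}(0)$ ends in the letter $h$, so writing $\psi(0,h) = q\cdot h$ we have $S^{-1}(\psi(0,h)) = h\cdot q$, hence
\[\varphi^{k-h-1}(h\cdot q) = \varphi^{k-h-1}(h)\cdot\varphi^{k-h-1}(q) = \psi(h,k-1)\cdot\varphi^{k-h-1}(q).\]
On the other hand $\varphi^{k-h-1}(q)\cdot\psi(h,k-1) = \varphi^{k-h-1}(q\cdot h) = \varphi^{k-h-1}(\psi(0,h)) = \varphi^{k-1}(0) = \psi(0,k-1)$, so $\varphi^{k-h-1}(q)$ is precisely $\psi(0,k-1)$ with its suffix $\psi(h,k-1)$ removed. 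Writing $\psi(0,k-1) = p\cdot\psi(h,k-1)$, we get $\varphi^{k-h-1}(S^{-1}(\psi(0,h))) = \psi(h,k-1)\cdot p = S^{-|\psi(h,k-1)|}(\psi(0,k-1)) = S^{-a(h,k-1)}(\psi(0,k-1))$, which is exactly the recursion.

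The conceptual core — and the step I expect to be the main obstacle — is this last identification. It is really the observation that a morphism intertwines a cyclic rotation of $w = uv$ with the corresponding rotation of $\varphi^{n}(w)$ (if $w = uv$ then $\varphi^{n}(vu) = S^{-|\varphi^{n}(u)|}(\varphi^{n}(w))$), applied here to the factorization of $\psi(0,h)$ at its last letter and to the induced factorization $\psi(0,k-1) = p\cdot\psi(h,k-1)$ of its image. Once that is in place, the remaining ingredients — $\varphi^{k-h-1}(h+1) = \psi(h+1,k)$, $\varphi^{k-h-1}(\psi(0,h)) = \psi(0,k-1)$, the identity $a(\ell,m) = |\psi(\ell,m)|$, and the degenerate cases $h=k$ and $k=h+1$ — are all immediate from $\psi(h,k) = \varphi^{k-h}(h)$ and amount to bookkeeping.
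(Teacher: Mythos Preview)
Your proposal is correct and follows essentially the same route as the paper: both derive the one-step recursion $\psi(h,k) = S^{-a(h,k-1)}(\psi(0,k-1))^{2}\cdot\psi(h+1,k)$ from $\psi(h,k) = \varphi^{k-h-1}(\varphi(h))$ via the morphism--rotation intertwining, and then iterate it down to $\psi(k,k)=k$. The paper compresses into a single line the step you spell out in detail (that $\varphi^{k-h-1}$ carries $S^{-1}(\varphi^{h}(00))$ to $S^{-|\varphi^{k-h-1}(h)|}(\varphi^{k-1}(00))$), and it works with $\varphi^{h}(00)$ throughout rather than first factoring it as $\psi(0,h)^{2}$, but these are cosmetic differences.
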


\begin{proof}
The result is immediate for $h = k$, since then $\psi(k,k) = k$. For $h < k$ we have
\begin{align*}
	\psi(h,k)
		&= \varphi^{k-h-1}(\varphi(h)) \\
		&= \varphi^{k-h-1}(S^{-1}(\varphi^h(00)) \cdot (h+1)) \\
		&= S^{-|\varphi^{k-h-1}(h)|}(\varphi^{k-1}(00)) \cdot \varphi^{k-h-1}(h+1) \\
		&= S^{-|\psi(h,k-1)|}(\psi(0,k-1))^2 \cdot \psi(h+1,k),
\end{align*}
and the result follows by induction on $k-h$.
\end{proof}

\begin{corollary}\label{ahk}
We have the recurrence
\[a(h,k) = 2(k-h) a(0,k-1) + 1\]
for $0 \leq h \leq k$ and $k \geq 1$, with initial condition $a(0,0) = 1$. Furthermore,
\[a(0,k) = \sum_{\ell=0}^k \frac{2^k k!}{2^\ell \ell!} = \lfloor 2^k k! \sqrt{e} \rfloor.\]
\end{corollary}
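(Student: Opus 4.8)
The plan is to get the recurrence and initial condition essentially for free from Theorem~\ref{fact} by passing to lengths, then to verify the closed form by checking that the proposed formula satisfies the same recurrence, and finally to obtain the floor expression from the Taylor series of $\sqrt{e}$.

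First I would take the length of both sides of the identity in Theorem~\ref{fact}. Since the cyclic shift $S$ (and every power of $S^{-1}$) preserves length, each of the $k-h$ factors $S^{-a(\ell,k-1)}(\psi(0,k-1))^2$ contributes $2|\psi(0,k-1)| = 2a(0,k-1)$, while the trailing letter $k$ contributes $1$. This yields $a(h,k) = 2(k-h)\,a(0,k-1) + 1$ at once (the empty-product case $h=k$ giving $a(k,k)=1$, consistent with $\psi(k,k)=k$). The initial condition is just $a(0,0) = |\psi(0,0)| = |\varphi^0(0)| = |0| = 1$ via Corollary~\ref{psicor}.

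Next, for the closed form, set $h=0$ to get $a(0,k) = 2k\,a(0,k-1)+1$ for $k\ge 1$, and then prove by induction on $k$ that $b_k := \sum_{\ell=0}^{k} \frac{2^k k!}{2^\ell \ell!}$ satisfies this same recurrence with $b_0 = 1$. The base case is clear, and for the inductive step I would split off the $\ell=k$ term (which equals $1$) and use the identity $\frac{2^k k!}{2^\ell \ell!} = 2k \cdot \frac{2^{k-1}(k-1)!}{2^\ell \ell!}$ to rewrite the remaining sum as $2k\,b_{k-1}$; hence $b_k = 2k\,b_{k-1} + 1 = a(0,k)$. For the floor expression I would use $\sqrt{e} = e^{1/2} = \sum_{j=0}^{\infty} \frac{1}{2^j j!}$, so that
\[2^k k!\,\sqrt{e} \;=\; \sum_{j=0}^{\infty} \frac{2^k k!}{2^j j!} \;=\; a(0,k) + r_k, \qquad r_k := \sum_{j=k+1}^{\infty} \frac{k!}{j!}\cdot\frac{1}{2^{\,j-k}}.\]
Since $0 < \frac{k!}{j!} \le 1$ for $j \ge k$ (strict for $j>k$), comparison with a geometric series gives $0 < r_k < \sum_{m\ge 1} 2^{-m} = 1$, so $a(0,k) < 2^k k!\,\sqrt{e} < a(0,k)+1$ and therefore $\lfloor 2^k k!\,\sqrt{e}\rfloor = a(0,k)$.

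I do not expect any serious obstacle here: the recurrence is a direct length count on Theorem~\ref{fact}, and solving a first-order recurrence of the form $a_k = 2k\,a_{k-1}+1$ is routine. The only step requiring a moment's care is the last one — recognizing the series for $\sqrt{e}$ and checking that the tail $r_k$ lies strictly between $0$ and $1$ — and even that reduces to a one-line geometric-series bound.
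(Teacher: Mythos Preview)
Your proof is correct and is precisely the direct calculation the paper intends: read off the recurrence from the lengths in Theorem~\ref{fact}, verify the closed form by checking that $\sum_{\ell=0}^{k}\frac{2^k k!}{2^\ell \ell!}$ satisfies the same first-order recurrence, and bound the tail of the exponential series for $\sqrt{e}$ geometrically. One harmless slip: the parenthetical ``strict for $j>k$'' fails at $k=0$, $j=1$ (since $0!/1!=1$), but strictness from $j\ge k+2$ onward still forces $r_k<1$, so the conclusion stands.
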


\begin{proof}
Direct calculation.
\end{proof}

Table~\ref{tableahk} below gives the first few values of $a(h,k)$.

\begin{table}[H]
\begin{center}
\begin{tabular}{|c||r|r|r|r|r|r|}\hline
$h \backslash k$ & 0 & 1 & 2 & 3 & 4 & 5 \\ \hline \hline
0 & 1 & 3 & 13 & 79 & 633 & 6331 \\ \hline
1 &   & 1 &  7 & 53 & 475 & 5065 \\ \hline
2 &   &   &  1 & 27 & 317 & 3799 \\ \hline
3 &   &   &    &  1 & 159 & 2533 \\ \hline
4 &   &   &    &    &   1 & 1267 \\ \hline
5 &   &   &    &    &     &    1 \\ \hline
\end{tabular}
\end{center}
\caption{Some values of $a(h,k) = |\protect\psi(h,k)|$.}
\label{tableahk}
\end{table}

Note that the sequence $\big( a(0,k) \big)_k = (1, 3, 13, 79, 633, \ldots)$ is Sloane's sequence A010844 and has exponential generating function $\exp(x)/(1-2x)$. Another sequence of interest is $\big( |\varphi(h)| \big)_h = (3, 7, 27, 159, 1267, \ldots)$, which given by $|\varphi(h)| = a(h,h+1) = 2a(0,h) + 1$.

Note also that given the structure from Theorem~\ref{fact} and the fact that the function $a(0,k)$ grows quite fast and can be computed easily, it is possible to compute $\w_{2^+}[n]$ in time bounded by a polynomial in $\log n$, using the following algorithm:

\begin{tabbing}
while \= while \= while \= while \= \kill

function eval($n$): \\

\> \textit{// Find the first value of $a(0,k)$ which is at least $n$.} \\
\> $k := 0$; \\
\> $a[0] := 1$; \\
\> while ($a[k] < n$) do \\
\>\> $k := k+1$; \\
\>\> $a[k] := 2ka[k-1] + 1$; \\

\> \textit{// Compute $\psi(0,k)[n]$. This quantity is the loop invariant.} \\
\> while ($k \geq 0$) do \\
\>\> if ($n = a[k]$) then \\
\>\>\> \textit{// The last letter of $\psi(0,k)$ is $k$.} \\
\>\>\> return($k$); \\
\>\> else \\
\>\>\> \textit{// The letter falls in a block of the form $S^{-a(\ell,k-1)}(\psi(0,k-1))^2$.} \\
\>\>\> if ($k-1 > 0$) then \\
\>\>\>\> $\ell := \lfloor (n-1) / 2a[k-1] \rfloor$; \\
\>\>\>\> $\mathrm{shift} := 2 \ell a[k-2]$; \\
\>\>\> else \\
\>\>\>\> $\mathrm{shift} := 0$; \\
\>\>\> \textit{// Maintain the loop invariant while reducing $k$.} \\
\>\>\> $n := ((n + \mathrm{shift} - 1) \bmod a[k-1]) + 1$; \\
\>\>\> $k := k-1$;
\end{tabbing}

The next thing to consider is the frequency of each letter in $\w_{2^+}$ and the words $\psi(h,k)$. For fixed points of morphisms over a finite alphabet generated by iteration, it is well-known that the frequency of a letter, if it is exists, must be an algebraic number \cite[Thm.\ 8.4.5]{Allouche&Shallit:2003}. Now $\w_{2^+}$ is the fixed point of a morphism, but over an infinite alphabet. As we will see the frequency of each letter is transcendental.

The following corollary gives the distribution of letters for $\psi(0,k)$, from which the distribution of letters for $\psi(h,k)$ can easily be computed.

\begin{corollary}
Let $d(h,k)$ be the number of times $h$ occurs in $\psi(0,k)$. Then we have the recurrence
\[d(h,k) = 2k d(h,k-1)\]
for $0 \leq h < k$, with initial conditions $d(k,k) = 1$ for $k \geq 0$. Hence $d(h,k) = 2^{k-h} k!/h!$ for $0 \leq h \leq k$.
\end{corollary}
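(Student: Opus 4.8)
The plan is to read off the recurrence directly from the factorization in Theorem~\ref{fact}. Specializing that result to $h = 0$ gives
\[\psi(0,k) = \left( \prod_{\ell=0}^{k-1} S^{-a(\ell,k-1)}(\psi(0,k-1))^2 \right) \cdot k,\]
and the key observation is that the cyclic shift operator $S^{-1}$, and hence any power $S^{-a}$, merely permutes the letters of a word, so each block $S^{-a(\ell,k-1)}(\psi(0,k-1))$ contains exactly $d(h,k-1)$ occurrences of the letter $h$, for every $h$ and every $\ell$. Thus the bracketed product contributes $2k\,d(h,k-1)$ occurrences of $h$ in total, and the trailing letter $k$ contributes one more occurrence of $k$ but nothing to any letter $h < k$. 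This already yields $d(h,k) = 2k\,d(h,k-1)$ for $0 \le h < k$, provided we also know that no letter $k$ hides inside $\psi(0,k-1)$ itself; but by Corollary~\ref{psicor} and Lemma~\ref{form} (plus an easy induction on the number of iterations of $\varphi$), $\psi(0,k-1) = \varphi^{k-1}(0)$ lies in $\{0,\ldots,k-1\}^*$, so this is automatic.

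For the initial condition I would show $d(k,k) = 1$ for all $k \ge 0$ by induction on $k$. The base case $d(0,0) = 1$ is immediate since $\psi(0,0) = 0$. For the inductive step, write $\psi(0,k) = \varphi^k(0) = \varphi(\varphi^{k-1}(0))$; since $\varphi^{k-1}(0) \in \{0,\ldots,k-1\}^*$, Lemma~\ref{form} says the number of occurrences of $k$ in $\varphi(\varphi^{k-1}(0))$ equals the number of occurrences of $k-1$ in $\varphi^{k-1}(0)$, which is $d(k-1,k-1) = 1$ by the induction hypothesis. (This can also be seen from Theorem~\ref{fact}, since no block of the form $S^{-a}(\psi(0,k-1))^2$ contains the letter $k$.) The same argument, with $k$ replaced by $h$, gives $d(h,h) = 1$.

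Finally, unrolling the recurrence from $k$ down to $h$ gives
\[d(h,k) = (2k)\,(2(k-1))\cdots(2(h+1))\,d(h,h) = 2^{k-h}\,\frac{k!}{h!}\,d(h,h) = 2^{k-h}\,\frac{k!}{h!},\]
using $d(h,h) = 1$. As a sanity check, $\sum_{h=0}^k d(h,k) = 2^k k! \sum_{h=0}^k 1/(2^h h!) = a(0,k)$, in agreement with Corollary~\ref{ahk}.

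I do not expect a genuine obstacle here: the entire content is bookkeeping on top of Theorem~\ref{fact}. The only point requiring a little care is justifying that cyclic shifts and the single trailing letter do not disturb the letter counts, and that $\psi(0,k-1)$ is genuinely a word over $\{0,\ldots,k-1\}$ so that the recurrence ranges and the initial conditions line up; both are handled by Lemma~\ref{form}.
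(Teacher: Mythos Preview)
Your proof is correct and follows exactly the approach the paper intends: the paper's own proof simply says ``the recurrence follows directly from Theorem~\ref{fact}, and the rest is a direct calculation,'' and you have faithfully expanded that one-line argument, including the observation that cyclic shifts preserve letter multiplicities and that $\psi(0,k-1)\in\{0,\ldots,k-1\}^*$.
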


\begin{proof}
The recurrence follows directly from Theorem~\ref{fact}, and the rest is a direct calculation.
\end{proof}

Table~\ref{tabledhk} below gives the first few values of $d(h,k)$.

\begin{table}[H]
\begin{center}
\begin{tabular}{|c||r|r|r|r|r|r|}\hline
$h \backslash k$ & 0 & 1 & 2 & 3 & 4 & 5 \\ \hline \hline
0 & 1 & 2 & 8 & 48 & 384 & 3840 \\ \hline
1 &   & 1 & 4 & 24 & 192 & 1920 \\ \hline
2 &   &   & 1 &  6 &  48 &  480 \\ \hline
3 &   &   &   &  1 &   8 &   80 \\ \hline
4 &   &   &   &    &   1 &   10 \\ \hline
5 &   &   &   &    &     &    1 \\ \hline
\end{tabular}
\end{center}
\caption{Some values of $d(h,k)$, giving the letter frequencies in $\protect\psi(0,k)$.}
\label{tabledhk}
\end{table}

\begin{theorem}
For all $k \in \N$, the limiting frequency of the letter $k$ in $\w_{2^+}$ exists and is equal to $1/2^k k! \sqrt{e}$.
\end{theorem}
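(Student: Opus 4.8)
The plan is to extract the limiting frequency from the block structure of Theorem~\ref{fact}, controlling the error incurred by cutting a prefix off in the middle of a block. The easy half is the subsequence of "full" prefixes: by Corollary~\ref{psicor} the words $\psi(0,n)=\varphi^n(0)$ are precisely the prefixes of $\w_{2^+}$ of length $a(0,n)$, so using $d(h,k)=2^{k-h}k!/h!$ and $a(0,k)=\lfloor 2^k k!\sqrt{e}\rfloor$ (hence $a(0,n)/(2^n n!)\to\sqrt{e}$) the frequency of $k$ in $\psi(0,n)$ is
\[\frac{d(k,n)}{a(0,n)} = \frac{2^{-k}/k!}{a(0,n)/(2^n n!)} \longrightarrow \frac{1}{2^k k!\sqrt{e}},\]
and as a sanity check $\sum_{k}\frac{1}{2^k k!\sqrt{e}}=\frac{1}{\sqrt{e}}\,e^{1/2}=1$. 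Write $c_k(N)$ for the number of occurrences of $k$ among the first $N$ letters of $\w_{2^+}$ and set $g(N)=c_k(N)-\frac{1}{2^k k!\sqrt{e}}\,N$; the theorem amounts to $g(N)=o(N)$.

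By Theorem~\ref{fact}, for $n>k$ the word $\psi(0,n)$ is a concatenation of $2n$ words, each a cyclic shift of $\psi(0,n-1)$ (the first two of which are literally $\psi(0,n-1)$, since $S^{-a(0,n-1)}$ acts as the identity on a word of length $a(0,n-1)$), followed by the single letter $n$; each such shift contains exactly $d(k,n-1)$ copies of $k$, since a cyclic shift preserves the multiset of letters. The crucial observation is that a prefix of a cyclic shift of $\psi(0,n-1)$ is a factor of $\psi(0,n-1)^2$, and $\psi(0,n-1)^2$ is itself a prefix of $\w_{2^+}$, so the number of $k$'s in such a prefix equals $c_k(b+r)-c_k(b)$ for suitable $b,r$ with $b+r\le 2a(0,n-1)$. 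Peeling off one copy of $\psi(0,n-1)$ when $b+r>a(0,n-1)$, and using
\[d(k,n-1) - \tfrac{1}{2^k k!\sqrt{e}}\,a(0,n-1) = \tfrac{1}{2^k k!\sqrt{e}}\bigl(2^{n-1}(n-1)!\sqrt{e} - a(0,n-1)\bigr) \in \Bigl[\,0,\ \tfrac{1}{2^k k!\sqrt{e}}\,\Bigr),\]
a short computation in which all the terms linear in $N$ cancel collapses everything to a recursion of the shape
\[g(N) = c\,\theta_n + g(N_1) - g(N_2), \qquad 0\le c\le 2n+1,\quad 0\le\theta_n<\tfrac{1}{2^k k!\sqrt{e}},\quad N_1,N_2\le a(0,n-1),\]
valid whenever $N\le a(0,n)$ and $n>k$ (with a harmless $O(1)$ correction in the boundary case where the prefix ends exactly at a block boundary or at the trailing letter).

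From here the conclusion is mechanical. Put $h_n=\sup_{0\le N\le a(0,n)}|g(N)|$; the recursion gives $h_n\le 2h_{n-1}+(2n+1)$ for $n>k$, while the base case $h_k\le 1$ holds because $\w_{2^+}[1\ldots a(0,k)]=\psi(0,k)$ contains a single $k$ (Lemma~\ref{form}) and $\frac{1}{2^k k!\sqrt{e}}\,a(0,k)\le 1$. Solving this linear recurrence gives $h_n=O(2^n)$ with an implied constant depending only on $k$, which is negligible next to $a(0,n)\sim 2^n n!\sqrt{e}$. Hence for $a(0,n-1)\le N<a(0,n)$ we get $|g(N)|/N\le h_n/a(0,n-1)=O\!\bigl(2^n/(2^{n-1}(n-1)!)\bigr)\to 0$, so $c_k(N)/N\to \frac{1}{2^k k!\sqrt{e}}$.

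I expect the main obstacle to be the bookkeeping around the cyclic shifts: verifying that a partial block, even though it is a cyclic shift rather than a prefix of $\psi(0,n-1)$, can still be rewritten through the single function $c_k$ evaluated at smaller arguments (this is exactly where $\psi(0,n-1)^2$ being a prefix of $\w_{2^+}$ is used), and checking that the linear-in-$N$ contributions cancel so that the recursion for $g$ is driven only by the bounded quantity $\theta_n$.
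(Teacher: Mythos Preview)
Your argument is correct and takes a genuinely different route from the paper's.  The paper fixes the letter $k$ only to establish the \emph{relative} frequencies $\lim_n f(k,n)/f(0,n)=1/(2^k k!)$, by observing that every prefix of $\w_{2^+}$ decomposes into rotations of $\psi(0,k)$ plus letters larger than $k$; the bulk of the work then goes into proving $n/f(0,n)\to\sqrt{e}$, which is done by a uniform–convergence argument (bounding $f(k,n)\le 2n/a(0,k)$ and using convergence of $\sum_k 1/(2^k k!)$ to interchange two limits).  You instead fix $k$ throughout and recurse on the level $n$: using Theorem~\ref{fact} to split a prefix of $\psi(0,n)$ into full rotated copies of $\psi(0,n-1)$ plus one partial copy, and the nice observation that $\psi(0,n-1)^2$ is itself a prefix of $\w_{2^+}$ (since $\varphi(0)=001$) to express the partial copy as $c_k(b+r)-c_k(b)$.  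This yields the clean recursion $h_n\le 2h_{n-1}+O(n)$ on the discrepancy, hence $h_n=O(2^n)$, which is negligible against $a(0,n)\sim 2^n n!\sqrt{e}$.  Your approach is more elementary in that it sidesteps the interchange–of–limits issue entirely and, as a bonus, gives an explicit rate of convergence $|c_k(N)/N-\alpha|=O(1/(n-1)!)$ for $N\le a(0,n)$; the paper's approach, on the other hand, treats all letters at once and isolates the relative frequencies as a separate fact of independent interest.
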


\begin{proof}
First we establish the relative frequencies of the letters. For all letters $k \in \N$ and lengths $n \geq 1$, let $f(k,n)$ be the number of occurrences of the letter $k$ in the prefix of $\w_{2^+}$ of length $n$. From Theorem~\ref{fact}, we know that rotations of $\psi(0,k)$ of the form $S^{-a(h,k)}(\psi(0,k))$ with $0 \leq h \leq k$ can be decomposed as concatenations of the letter $k$ and of rotations of $\psi(0,k-1)$ of the form $S^{-a(h',k-1)}(\psi(0,k-1))$ with $0 \leq h' \leq k-1$, in some order. By induction, it follows that for all $k' \geq k$, $\psi(0,k')$ can be decomposed as a concatenation of letters greater than $k$ and of rotations of $\psi(0,k)$ in some order.
	
Thus, each prefix of $\w_{2^+}$ consists of letters greater than $k$, a certain number of rotations of $\psi(0,k)$, and possibly a prefix of a rotation of $\psi(0,k)$. Since each rotation of $\psi(0,k)$ contains a single occurrence of the letter $k$ and exactly $2k$ occurrences of the letter $k-1$, we have $\lim_{n\to\infty} f(k-1,n)/f(k,n) = 2k$ and
\[\lim_{n\to\infty} \frac{f(k,n)}{f(0,n)} = \frac{1}{2^k k!}.\]

Next we show that $\lim_{n\to\infty} n/f(0,n) = \sqrt{e}$. Letting $F(\ell,n) = \sum_{k=0}^k f(k,n)$, we have
\[\lim_{n\to\infty} \frac{F(\ell,n)}{f(0,n)} = \sum_{k=0}^\ell \frac{1}{2^k k!}.\]
Since $\lim_{\ell\to\infty} F(\ell,n) = n$ pointwise, it is tempting to conclude that
\[\lim_{n\to\infty} \frac{n}{f(0,n)} = \sum_{k=0}^\infty \frac{1}{2^k k!} = \sqrt{e},\]
but for that we need some kind of uniform convergence, which we establish below.

From the remarks above on the decomposition of $\psi(0,k')$, it follows that for all $k,n$,
\[f(k,n) \leq \left\lceil \frac{n}{a(0,k)} \right\rceil.\]
Also, since the letter $k$ first appears at position $a(0,k)$, we have the more convenient bound
\[f(k,n) \leq \frac{2n}{a(0,k)} \leq \frac{2n}{2^k k!},\]
which is stronger for small values of $n$. Since $\sum_{k=0}^\infty 2/2^k k!$ is a convergent series, we get
\[\lim_{\ell\to\infty} \frac{F(\ell,n)}{n} = 1 - \lim_{\ell\to\infty} \sum_{k=\ell+1}^\infty \frac{f(k,n)}{n} = 1\]
uniformly in $n$.

Since the convergence is uniform in $n$ and the relative letter frequencies converge, writing
\[\frac{n}{f(0,n)} = \frac{F(\ell,n)}{f(0,n)} \cdot \frac{n}{F(\ell,n)}\]
for large enough $\ell$ shows that the quantity $n/f(0,n)$ can be bounded independently of $n$. Then, we have
\[\lim_{\ell\to\infty} \frac{F(\ell,n)}{f(0,n)} = \lim_{\ell\to\infty} \frac{n}{f(0,n)} \cdot \frac{F(\ell,n)}{n} = \frac{n}{f(0,n)}\]
uniformly in $n$, so we get
\[\lim_{n\to\infty} \frac{n}{f(0,n)} = \lim_{\ell\to\infty} \lim_{n\to\infty} \frac{F(\ell,n)}{f(0,n)} = \sqrt{e}\]
as desired, and the result follows.
\end{proof}

Finally, we can also derive some symmetry properties of $\psi(h,k)$. A surprising number of them follow from the next result.

\begin{theorem}\label{mgp}
For $k \in \N$, we have $S^{-1}(\psi(0,k)) = R(\psi(0,k))$.
\end{theorem}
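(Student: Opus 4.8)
The plan is to induct on $k$, using the block decomposition of $\psi(0,k)$ from Theorem~\ref{fact} together with the arithmetic of the lengths $a(h,k)$ from Corollary~\ref{ahk}. I would dispose of the base cases $k=0$ (where $\psi(0,0)=0$) and $k=1$ (where $\psi(0,1)=001$ and $S^{-1}(001)=100=R(001)$) by direct inspection.

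For the inductive step, fix $k\ge 2$ and write $u=\psi(0,k-1)$, so the inductive hypothesis is $R(u)=S^{-1}(u)$. First I would record the elementary identity $R\circ S^{-a}=S^{a}\circ R$ on finite words (reversing a cyclic rotation reverses its direction), which gives
\[
R\bigl(S^{-a}(u)\bigr)=S^{a}\bigl(R(u)\bigr)=S^{a}\bigl(S^{-1}(u)\bigr)=S^{a-1}(u),
\]
and hence $R\bigl((S^{-a}(u))^2\bigr)=(S^{a-1}(u))^2$. Feeding this into $\psi(0,k)=\bigl(\prod_{\ell=0}^{k-1}(S^{-a(\ell,k-1)}(u))^2\bigr)\cdot k$ and using $R(k)=k$ (and remembering that $R$ also reverses the order of the $k$ blocks) yields
\[
R(\psi(0,k))=k\cdot\prod_{\ell=0}^{k-1}\bigl(S^{\,a(k-1-\ell,\,k-1)-1}(u)\bigr)^2 .
\]
On the other side, since $\psi(0,k)$ ends with the single letter $k$, moving that letter to the front gives $S^{-1}(\psi(0,k))=k\cdot\prod_{\ell=0}^{k-1}(S^{-a(\ell,k-1)}(u))^2$. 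Comparing the two products block by block, and using that rotations $S^{i}(u)$ and $S^{j}(u)$ agree whenever $i\equiv j\pmod{|u|}$, it then suffices to prove for each $\ell$ the congruence
\[
a(\ell,k-1)+a(k-1-\ell,k-1)\equiv 1 \pmod{a(0,k-1)} .
\]

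The last step is the arithmetic check. By Corollary~\ref{ahk}, for $k-1\ge 1$ we have $a(\ell,k-1)=2(k-1-\ell)\,a(0,k-2)+1$, so the left side above equals $2(k-1)\,a(0,k-2)+2=a(0,k-1)+1$ by the recurrence $a(0,k-1)=2(k-1)\,a(0,k-2)+1$, which is indeed $\equiv 1\pmod{a(0,k-1)}$. This closes the induction.

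I expect the only delicate point to be the bookkeeping when $R$ is applied to the concatenation of blocks: one has to keep straight that reversal both reverses the order of the $2k$ rotational blocks and flips the sign of each block's shift, so that block $\ell$ on one side gets matched with block $k-1-\ell$ on the other. Once the indices are aligned, the single relation $a(0,k-1)=2(k-1)a(0,k-2)+1$ does all the work. As a sanity check and an alternative phrasing, the statement is equivalent to saying that $\psi(0,k)$ with its last letter $k$ deleted is a palindrome, and the induction can be carried out verbatim in those terms.
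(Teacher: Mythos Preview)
Your proof is correct and follows essentially the same route as the paper: induction on $k$, the block decomposition of $\psi(0,k)$ from Theorem~\ref{fact}, the identity $R\circ S^{-a}=S^{a}\circ R$, and the arithmetic relation $a(\ell,k-1)+a(k-1-\ell,k-1)=a(0,k-1)+1$ (which the paper states equivalently as $a(0,k-1)-a(h,k-1)=a(k-1-h,k-1)-1$). The only cosmetic difference is that the paper transforms $S^{-1}(\psi(0,k))$ directly into $R(\psi(0,k))$ in one chain of equalities, whereas you compute each side separately and then match blocks via a congruence modulo $|u|$.
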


\begin{proof}
We proceed by induction on $k$ with a vacuous base case. For $0 \leq h \leq k-1$, we have that $a(0,k-1) - a(h,k-1) = a(k-1-h,k-1) - 1$, so
\begin{align*}
	S^{-1}(\psi(0,k))
		&= k \cdot \prod_{h=0}^{k-1} S^{-a(h,k-1)}(\psi(0,k-1))^2 \\
		&= k \cdot \prod_{h=0}^{k-1} S^{a(k-1-h,k-1)}(S^{-1}(\psi(0,k-1)))^2 \\
		&= k \cdot \prod_{h=0}^{k-1} S^{a(k-1-h,k-1)}(R(\psi(0,k-1)))^2 \\
		&= k \cdot \prod_{h=0}^{k-1} R(S^{-a(k-1-h,k-1)}(\psi(0,k-1)))^2 \\
		&= k \cdot R\left( \prod_{h'=0}^{k-1} S^{-a(h',k-1)}(\psi(0,k-1))^2 \right) \\
		&= R(\psi(0,k)). \qedhere
\end{align*}
\end{proof}

\begin{corollary}\label{hkc}\ %
\begin{enumerate}[(a)]
	\item
		For all $0 \leq h \leq k$, $R(\psi(k-h,k)) \cdot \psi(h,k) = k \cdot \psi(0,k)$;
	\item
		For all $k \geq 0$, the word $\psi(0,k) \cdot k^{-1}$ is a palindrome;
	\item
		For all $h \in \N$, $\varphi(h) = S^{-1}(\psi(0,h))^2 \cdot (h+1) = R(\psi(0,h))^2 \cdot (h+1)$;
	\item
		For all $h \in \N$, the word $h^{-1} \cdot \varphi(h) \cdot (h+1)^{-1}$ is a palindrome.
	\item
		For all $h \in \N$, the word $\varphi \circ R \circ \varphi(h)$ is a palindrome.
\end{enumerate}
\end{corollary}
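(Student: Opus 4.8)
The plan is to deduce all five parts from three facts already established: the factorization of Theorem~\ref{fact}, the symmetry $S^{-1}(\psi(0,k)) = R(\psi(0,k))$ of Theorem~\ref{mgp}, and the identity $\psi(h,k)=\varphi^{k-h}(h)$ of Corollary~\ref{psicor}, together with the endpoint information of Lemma~\ref{form}. The one mechanical fact used throughout is that reversal conjugates cyclic shifts, $R\circ S^{-m}=S^{m}\circ R$, which follows from $R(S(cx))=c\,R(x)=S^{-1}(R(cx))$, and that $S^{m}$ is the identity on any word of length $m$.

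The workhorse is a ``block-reversal'' lemma. Writing $w_\ell:=S^{-a(\ell,k-1)}(\psi(0,k-1))^2$, so that Theorem~\ref{fact} reads $\psi(h,k)=w_h w_{h+1}\cdots w_{k-1}\cdot k$, I claim $R(w_\ell)=w_{k-1-\ell}$ for $0\le\ell\le k-1$. To see this, apply $R$, use $R(u^2)=R(u)^2$ and then $R\circ S^{-m}=S^{m}\circ R$, replace $R(\psi(0,k-1))$ by $S^{-1}(\psi(0,k-1))$ via Theorem~\ref{mgp}, and finally invoke the arithmetic identity $a(\ell,k-1)-1=a(0,k-1)-a(k-1-\ell,k-1)$ used already in the proof of Theorem~\ref{mgp}, reducing the shift exponent modulo $a(0,k-1)=|\psi(0,k-1)|$. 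This bookkeeping of shift exponents is the step I expect to require the most care; everything else is formal, and no induction is needed once Theorems~\ref{fact} and~\ref{mgp} are in hand.

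Given the block-reversal lemma, part (a) is immediate: Theorem~\ref{fact} gives $\psi(0,k)=(w_0\cdots w_{h-1})\,\psi(h,k)$, while $R(\psi(k-h,k))=k\cdot R(w_{k-1})\cdots R(w_{k-h})=k\cdot w_0 w_1\cdots w_{h-1}$ by the lemma, so $R(\psi(k-h,k))\cdot\psi(h,k)=k\cdot\psi(0,k)$. Part (b) is the case $h=k$ of (a): writing $\psi(0,k)=p\cdot k$, the identity becomes $k\,R(p)\,k=k\,p\,k$, forcing $R(p)=p$. For (c) I use the closed form $\varphi(h)=S^{-1}(\varphi^{h}(00))\cdot(h+1)$ that follows from the definition of $\varphi_h$, observe $\varphi^{h}(00)=\varphi^{h}(0)^2=\psi(0,h)^2$ by Corollary~\ref{psicor}, apply $S^{-1}(u^2)=(S^{-1}u)^2$, and replace $S^{-1}(\psi(0,h))$ by $R(\psi(0,h))$ via Theorem~\ref{mgp}. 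Part (d) then follows: by (c), Theorem~\ref{mgp}, and (b), $\varphi(h)=R(\psi(0,h))^2(h+1)=(hp)(hp)(h+1)$ with $p:=\psi(0,h)h^{-1}$ a palindrome, and stripping the leading $h$ and trailing $h+1$ (present by Lemma~\ref{form}) leaves $p\,h\,p$, which $R$ fixes. Finally, for (e), (c) gives $\varphi(h)=R(\psi(0,h))^2(h+1)$, hence $R(\varphi(h))=(h+1)\,\psi(0,h)^2$; applying $\varphi$ and using $\varphi(\psi(0,h))=\varphi^{h+1}(0)=\psi(0,h+1)$ together with (c) once more gives $\varphi\circ R\circ\varphi(h)=R(\psi(0,h+1))^2\cdot(h+2)\cdot\psi(0,h+1)^2$, which is visibly invariant under $R$.
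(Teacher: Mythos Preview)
Your proof is correct and follows essentially the same approach as the paper: parts (b)--(e) are argued almost identically. The one difference worth noting is in part~(a): you re-derive the block-by-block reversal identity $R(w_\ell)=w_{k-1-\ell}$ (which is precisely the computation carried out inside the proof of Theorem~\ref{mgp}) and then reassemble the factorization of Theorem~\ref{fact}, whereas the paper bypasses Theorem~\ref{fact} entirely for this part, arguing directly that $k\cdot\psi(0,k)=R(\psi(0,k))\cdot k$ by Theorem~\ref{mgp} and then observing that $\psi(h,k)$ is a suffix of $\psi(0,k)$, $R(\psi(k-h,k))$ is a prefix of $R(\psi(0,k))$, and their lengths sum to $a(0,k)+1$ by Corollary~\ref{ahk}.
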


\begin{proof}\ %
\begin{enumerate}[(a)]
	\item
		By Theorem~\ref{mgp}, we have
		\[k \cdot \psi(0,k) = S^{-1}(\psi(0,k)) \cdot k = R(\psi(0,k)) \cdot k.\]
		We know that $\psi(h,k)$ is a suffix of $\psi(0,k)$ of length $a(h,k)$ and that $R(\psi(k-h,k))$ is a prefix of $R(\psi(0,k))$ of length $a(k-h,k)$. By Corollary~\ref{ahk}, $a(k-h,k) + a(h,k) = 1 + a(0,k)$, so this prefix and this suffix actually form all of $k \cdot \psi(0,k)$ together.
	\item
		From part (a), we know that $k \cdot \psi(0,k) = R(\psi(0,k)) \cdot k = R(k \cdot \psi(0,k))$ is a palindrome, and removing the first and last letter gives the palindrome $\psi(0,k) \cdot k^{-1}$.
	\item
		From Corollary~\ref{psicor} and Theorem~\ref{mgp}, we get
		\begin{align*}
			\varphi(h)
				&= S^{-1}(\varphi^h(0))^2 \cdot (h+1) \\
				&= S^{-1}(\psi(0,h))^2 \cdot (h+1) \\
				&= R(\psi(0,h))^2 \cdot (h+1)
		\end{align*}
	\item
		From part (c), we have
		\[\varphi(h) = R(\psi(0,h))^2 \cdot (h+1) = h \cdot R(\psi(0,h) \cdot h^{-1})^2 \cdot (h+1),\]
		and from part (b), $\psi(0,h) \cdot h^{-1}$ is a palindrome.
	\item
		We have
		\begin{align*}
			\varphi \circ R \circ \varphi(h)
				&= \varphi \circ R(R(\varphi^h(00)) \cdot (h+1)) \\
				&= \varphi((h+1) \cdot \varphi^h(00)) \\
				&= R(\varphi^{h+1}(00)) \cdot (h+2) \cdot \varphi^{h+1}(00),
		\end{align*}
		which is a palindrome. \qedhere
\end{enumerate}
\end{proof}

Given a morphism $\xi$, we can define its reversed morphism $\xi_R$ by reversing the image of each letter, so that $\xi_R = R \circ \xi \circ R$. Some rare morphisms, such as the Thue-Morse morphism $\mu \colon \{0,1\}^2 \to \{0,1\}^2$ defined by $\mu(0) = 01$ and $\mu(1) = 10$, have the property that they commute with their reversed morphism. The morphism $\varphi$ also has this property.

\begin{corollary}
The morphisms $\varphi$ and $\varphi_R$ commute.
\end{corollary}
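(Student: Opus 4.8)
The plan is to show that $\varphi \circ \varphi_R = \varphi_R \circ \varphi$ by checking the two morphisms agree on each letter $h \in \N$, which suffices since a morphism is determined by its action on letters. Recall $\varphi_R = R \circ \varphi \circ R$, so $\varphi_R(h) = R(\varphi(R(h))) = R(\varphi(h))$ since $h$ is a single letter. Thus I need to prove
\[
\varphi(R(\varphi(h))) = R(\varphi(\varphi(h)))
\]
for all $h$. The right-hand side is $R(\varphi^2(h))$, so after applying $R$ to both sides (and using that $R$ is an involution) the claim becomes $R(\varphi(R(\varphi(h)))) = \varphi^2(h)$, i.e., $\varphi_R \circ \varphi = \varphi^2$ on letters — but it is cleaner to just manipulate one side into the other directly.

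The key computation will reuse the identity from the proof of Corollary~\ref{hkc}(e). There we computed $\varphi \circ R \circ \varphi(h) = R(\varphi^{h+1}(00)) \cdot (h+2) \cdot \varphi^{h+1}(00)$. So the left-hand side of what I want is
\[
\varphi(R(\varphi(h))) = R(\varphi^{h+1}(00)) \cdot (h+2) \cdot \varphi^{h+1}(00).
\]
For the right-hand side, I compute $\varphi(\varphi(h))$ using the defining formula $\varphi(h) = S^{-1}(\varphi^h(00)) \cdot (h+1)$. Applying $\varphi$ and using that $\varphi$ commutes appropriately with $S^{-1}$ via the shift bookkeeping already established in the proof of Theorem~\ref{fact} (where $\varphi^{k-h-1}(S^{-1}(\varphi^h(00))\cdot(h+1)) = S^{-|\varphi^{k-h-1}(h)|}(\varphi^{k-1}(00)) \cdot \varphi^{k-h-1}(h+1)$ for the case of one application), I get that $\varphi^2(h) = \varphi(S^{-1}(\varphi^h(00)) \cdot (h+1)) = S^{-|\varphi(h)|}(\varphi^{h+1}(00)) \cdot \varphi(h+1)$. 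Now $S^{-|\varphi(h)|}(\varphi^{h+1}(00))$: since $\varphi(h)$ starts with $h$, shifting $\varphi^{h+1}(00) = \varphi(\varphi^h(00))$ left-cyclically by $|\varphi(h)|$ moves the first block $\varphi(h)$ — wait, more carefully, $\varphi^h(00)$ starts with $h$ (by Lemma~\ref{form}), so $\varphi^{h+1}(00)$ starts with the block $\varphi(h)$, and shifting by $|\varphi(h)|$ exactly cycles that first block to the end. Then applying $R$ to this and matching against $\varphi(R(\varphi(h)))$ — using $R(\varphi^{h+1}(00))$ and the palindromicity facts of Corollary~\ref{hkc} and Theorem~\ref{mgp} — should produce exactly the same word. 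Rather than push through the shift arithmetic a second time, it is likely smoother to observe that $\varphi \circ R \circ \varphi(h)$ is a palindrome (Corollary~\ref{hkc}(e)) and deduce $\varphi(R(\varphi(h))) = R(\varphi(R(\varphi(h)))) = R(\varphi(\varphi(h)))$ — but this last step needs $R \circ \varphi \circ R(\varphi(h)) = \varphi(R(\varphi(h)))$, i.e. $\varphi_R$ applied to $\varphi(h)$ equals... no, that is circular. So the honest route is the direct equality $\varphi(R(\varphi(h))) = R(\varphi^2(h))$ via the explicit forms.

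Here is the clean argument I would actually write: apply $R$ to Corollary~\ref{hkc}(e), which says $\varphi \circ R \circ \varphi(h)$ is a palindrome, to get $\varphi \circ R \circ \varphi(h) = R(\varphi \circ R \circ \varphi(h)) = R \circ \varphi \circ R(\varphi(h)) \cdot$(noting $R$ of a product reverses order and $R^2 = \mathrm{id}$)$\, = \varphi_R(\varphi(h))$ — no. Let me restate: a palindrome $p$ satisfies $p = R(p)$; here $p = \varphi(R(\varphi(h)))$, so $\varphi(R(\varphi(h))) = R(\varphi(R(\varphi(h)))) = R \circ \varphi \circ R \, (\varphi(h)) = \varphi_R(\varphi(h))$. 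Meanwhile I separately need $\varphi_R(\varphi(h)) = R(\varphi(\varphi(h)))$, which unwinds to $R(\varphi(R(\varphi(h)))) = R(\varphi(\varphi(h)))$, i.e. $\varphi(R(\varphi(h))) = \varphi(\varphi(h))$ — false in general. So the palindrome shortcut does not close the gap, and I must compare the two explicit expressions $R(\varphi^{h+1}(00))\cdot(h+2)\cdot\varphi^{h+1}(00)$ and $R\bigl(S^{-|\varphi(h)|}(\varphi^{h+1}(00)) \cdot \varphi(h+1)\bigr)$ head-on. Using $\varphi(h+1) = S^{-1}(\varphi^{h+1}(00)) \cdot (h+2)$ and the block structure of $\varphi^{h+1}(00)$ described above (first block $\varphi(h)$, and more relevantly, that its last block is $\varphi(h)$ as well — actually one should track that $\varphi^{h+1}(00) = \varphi^{h+1}(0)\varphi^{h+1}(0)$, a square, with each half ending in $h+1$), the reversal and shift should line up. The main obstacle is this shift-and-reversal bookkeeping — verifying that $S^{-|\varphi(h)|}$ applied inside the reversal produces precisely the cyclic rotation needed to align $(h+2)$ as the central letter — so I would set it up by writing $\varphi^{h+1}(00) = u \cdot v$ where $u = \varphi(h)$ is the first block, apply Theorem~\ref{mgp} / Corollary~\ref{hkc}(c) to express reversals of $\varphi^{h+1}(0)$ cleanly, and grind the equality of the two length-$(2|\varphi^{h+1}(00)|+1)$ words. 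Everything else is formal manipulation with $R$, $S$, and the defining recursion for $\varphi$.
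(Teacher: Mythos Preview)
Your setup contains a genuine error that derails everything afterward. You correctly compute $\varphi \circ \varphi_R(h) = \varphi(R(\varphi(h)))$, but then assert that $\varphi_R \circ \varphi(h) = R(\varphi(\varphi(h)))$. This is false: by definition $\varphi_R = R \circ \varphi \circ R$, so
\[
\varphi_R(\varphi(h)) = R\bigl(\varphi(R(\varphi(h)))\bigr),
\]
not $R(\varphi(\varphi(h)))$. The identity $\varphi_R(w) = R(\varphi(w))$ holds only when $w$ is a palindrome (in particular when $w$ is a single letter), and $\varphi(h)$ is not a palindrome --- it begins with $h$ and ends with $h+1$. Indeed, your target equation $\varphi(R(\varphi(h))) = R(\varphi^2(h))$ would force $R(\varphi(h)) = \varphi(h)$ by injectivity of $\varphi$, which already fails for $\varphi(0) = 001$.

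The irony is that midway through you write down the complete, correct proof and then discard it. You observe that $p = \varphi(R(\varphi(h)))$ is a palindrome by Corollary~\ref{hkc}(e), hence
\[
\varphi \circ \varphi_R(h) = \varphi(R(\varphi(h))) = R\bigl(\varphi(R(\varphi(h)))\bigr) = \varphi_R(\varphi(h)).
\]
That chain of equalities \emph{is} the whole argument --- it is exactly the paper's one-line proof. You reject it only because you think you must still reach the (incorrect) target $R(\varphi^2(h))$, and then launch into an unfinished shift-and-reversal computation aimed at a false identity. Fix the right-hand side of your target and the palindrome step you already found finishes the proof immediately; the entire second half of your proposal can be deleted.
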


\begin{proof}
It is enough to check that $\varphi \circ \varphi_R(h) = \varphi_R \circ \varphi(h)$ for all $h \in \N$. By Corollary~\ref{hkc}, we have
\[\varphi \circ \varphi_R(h) = \varphi \circ R \circ \varphi \circ R(h) = \varphi \circ R \circ \varphi(h) = R \circ \varphi \circ R \circ \varphi(h) = \varphi_R \circ \varphi(h). \qedhere\]
\end{proof}

We have already established the link between $\varphi$ and $\psi(h,k)$, but there is also a link between $\varphi_R$ and $\psi(h,k)$.

\begin{theorem}
For all $0 \leq h \leq k$ and $i \geq 0$, we have
\begin{align*}
	\psi(h,k+i) &= \varphi^i(\psi(h,k)) \\
	\psi(h+i,k+i) &= \varphi_R^i(\psi(h,k) \cdot k^{-1}) \cdot (k+i).
\end{align*}
\end{theorem}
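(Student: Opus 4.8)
The first identity is essentially a restatement of Corollary~\ref{psicor}: since $\psi(h,k)=\varphi^{k-h}(h)$, we get $\varphi^i(\psi(h,k))=\varphi^i(\varphi^{k-h}(h))=\varphi^{(k+i)-h}(h)$, and applying Corollary~\ref{psicor} once more identifies this with $\psi(h,k+i)$. So all the work is in the second identity, and I would prove it by a direct computation rather than by induction on $i$ (induction on $i$ also works, but merely reduces to the $i=1$ case, which needs the same ingredients anyway).

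The key preliminary fact is the identity
\[
 R\bigl(\psi(h,k)\cdot k^{-1}\bigr) \;=\; \psi(0,k)\cdot\psi(k-h,k)^{-1}
 \qquad (0 \le h \le k),
\]
i.e.\ reversing $\psi(h,k)$ with its final letter deleted yields the prefix of $\psi(0,k)$ obtained by deleting the suffix $\psi(k-h,k)$. I would derive this from Corollary~\ref{hkc}(a), which states $R(\psi(k-h,k))\cdot\psi(h,k)=k\cdot\psi(0,k)$; replacing $h$ by $k-h$ gives $R(\psi(h,k))\cdot\psi(k-h,k)=k\cdot\psi(0,k)$, and since $\psi(k-h,k)$ is a suffix of $\psi(0,k)$ (remark at the beginning of this section) it can be cancelled on the right to give $R(\psi(h,k))=k\cdot\psi(0,k)\cdot\psi(k-h,k)^{-1}$. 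Since $\psi(h,k)=\varphi^{k-h}(h)$ ends in $k$ (each application of $\varphi$ raises the last letter by $1$, by Lemma~\ref{form}), $R(\psi(h,k))$ begins with $k$, and stripping that leading $k$ turns the left side into $R(\psi(h,k)\cdot k^{-1})$ and the right side into $\psi(0,k)\cdot\psi(k-h,k)^{-1}$.

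With this in hand the computation chains together cleanly. Using $\varphi_R=R\circ\varphi\circ R$ and $R\circ R=\mathrm{id}$ one has $\varphi_R^{\,i}=R\circ\varphi^i\circ R$, so
\[
 \varphi_R^{\,i}\bigl(\psi(h,k)\cdot k^{-1}\bigr)
 = R\Bigl(\varphi^i\bigl(\psi(0,k)\cdot\psi(k-h,k)^{-1}\bigr)\Bigr).
\]
Now $\varphi^i$ is a morphism and, by Corollary~\ref{psicor}, sends $\psi(m,k)=\varphi^{k-m}(m)$ to $\psi(m,k+i)$ for every $m\le k$; writing $\psi(0,k)$ as the concatenation of its prefix $\psi(0,k)\cdot\psi(k-h,k)^{-1}$ with its suffix $\psi(k-h,k)$ and applying $\varphi^i$ yields
\[
 \varphi^i\bigl(\psi(0,k)\cdot\psi(k-h,k)^{-1}\bigr)
 = \psi(0,k+i)\cdot\psi(k-h,k+i)^{-1}.
\]
Finally I would apply the preliminary identity in the reversed form $R\bigl(\psi(0,K)\cdot\psi(K-H,K)^{-1}\bigr)=\psi(H,K)\cdot K^{-1}$ with $K=k+i$ and $H=h+i$ (so that $K-H=k-h$ and $0\le H\le K$), getting $\varphi_R^{\,i}(\psi(h,k)\cdot k^{-1})=\psi(h+i,k+i)\cdot(k+i)^{-1}$; appending $(k+i)$ gives $\psi(h+i,k+i)$, as required.

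The morphism bookkeeping with $\varphi^i$ and Corollary~\ref{psicor} is routine; the one place calling for care is the legitimacy of every left/right quotient — that each word being cancelled really is a prefix or suffix — which is where the remark at the start of this section, Lemma~\ref{form}, and (via $a(h,k)+a(k-h,k)=a(0,k)+1$ from Corollary~\ref{ahk}) the length arithmetic get used, together with getting the substitution $H=h+i$, $K=k+i$ right when the preliminary identity is reused in reverse. Everything else is mechanical.
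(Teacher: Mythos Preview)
Your proof is correct and follows essentially the same route as the paper: you isolate the same key identity $\psi(h,k)\cdot k^{-1}=R\bigl(\psi(0,k)\cdot\psi(k-h,k)^{-1}\bigr)$ (which you state in the $R$-applied form), then push $\varphi^i$ through using Corollary~\ref{psicor} and invoke the identity again with $(h+i,k+i)$ in place of $(h,k)$. The only cosmetic difference is that you obtain the preliminary identity by swapping $h\leftrightarrow k-h$ in Corollary~\ref{hkc}(a), whereas the paper rewrites the right-hand side via Theorem~\ref{mgp}; the two derivations are interchangeable.
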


\begin{proof}
The first equality follows directly from Corollary~\ref{psicor}. For the second equality, note that Corollary~\ref{hkc} gives
\[R(\psi(k-h,k)) \cdot \psi(h,k) = R(\psi(0,k)) \cdot k,\]
so that
\[\psi(h,k) \cdot k^{-1} = R(\psi(0,k) \cdot \psi(k-h,k)^{-1}).\]
Applying $\varphi_R^i = R \circ \varphi^i \circ R$ to both sides gives
\begin{align*}
	\varphi_R^i(\psi(h,k) \cdot k^{-1})
		&= R(\varphi^i(\psi(0,k) \cdot \psi(k-h,k)^{-1})) \\
		&= R(\psi(0,k+i) \cdot \psi(k-h,k+i)^{-1}) \\
		&= \psi(h+i,k+i) \cdot (k+i)^{-1}. \qedhere
\end{align*}
\end{proof}

\section{Further questions}

For the sake of simplicity, we have presented the proofs in this paper for squares and overlaps, but they can be extended easily enough to the case of arbitrary $n$th powers and $(n^+)$-powers for integer $n$, which is very similar. However, the case of fractional powers seems harder. (For a definition of fractional powers, see, for example \cite[p.\ 23]{Allouche&Shallit:2003}.) In fact, it is not even clear that an infinite alphabet is needed. For example, the first million letters of $\w_{5/2}$, the lexicographically least infinite word over $\N$ avoiding all powers with exponent $\geq 5/2$, are all in $\{0, 1, 2\}$.

Several other patterns $P$, especially when considered over $\N$, have the property that any finite $P$-free word can be extended to a longer word, in which case the no-backtracking algorithm will work. In such cases, the lexicographically least infinite $P$-free word is irreducible. One can ask, when is this word generated by a $P$-free irreducible morphism?

\end{document}